\newtheorem{theorem}{Theorem}
\newtheorem{lemma}[theorem]{Lemma}
\newtheorem{corollary}[theorem]{Corollary}
\newtheorem{proposition}[theorem]{Proposition}
\newcommand{\tto}{\twoheadrightarrow}
\begin{document}
\title[Canonical Gelfand-Zeitlin modules]
{Canonical Gelfand-Zeitlin modules over orthogonal Gelfand-Zeitlin algebras}

\author{Nick Early, Volodymyr Mazorchuk and Elizaveta Vishnyakova}

\begin{abstract}
We prove that every orthogonal Gelfand-Zeitlin algebra $U$ acts 
(faithfully) on its
Gelfand-Zeitlin subalgebra $\Gamma$. Considering the dual module,
we show that every Gelfand-Zeitlin character of $\Gamma$ is realizable in 
a $U$-module. We observe that the Gelfand-Zeitlin formulae can be rewritten 
using divided difference operators. It turns out that the action of the
latter operators on $\Gamma$ gives rise to an explicit basis in a certain
Gelfand-Zeitlin submodule of the dual module mentioned above. This gives,
generically,  both in the case of regular and singular  Gelfand-Zeitlin 
characters, an explicit construction of 
simple modules which realize the given Gelfand-Zeitlin characters.
\end{abstract}

\maketitle

\section{Introduction and description of the results}\label{s1}

In this paper we work over the field $\mathbb{C}$ of complex numbers.
For a positive integer $n\geq 1$, consider the flag 
\begin{displaymath}
\mathfrak{gl}_1\subset \mathfrak{gl}_2\subset\dots \subset 
\mathfrak{gl}_{n-1}\subset \mathfrak{gl}_n
\end{displaymath}
of general linear Lie algebras where each $\mathfrak{gl}_i$ is embedded into $\mathfrak{gl}_{i+1}$ in the obvious
way with respect to the top left corner. This flag induces a flag
\begin{displaymath}
U_1\subset U_2\subset\dots\subset U_{n-1}\subset U_n 
\end{displaymath}
of embeddings of the corresponding universal enveloping algebras, where $U_i$ denotes the 
universal enveloping algebra of $\mathfrak{gl}_i$. Let $Z_i$ be the center of $U_i$. Then the 
subalgebra $\Gamma_n$ of $U_n$, generated by all $Z_i$, where $i=1,2,\dots,n$, is called the 
{\em Gelfand-Zeitlin\footnote{The surname Zeitlin appears in the literature in different spellings,
that is in different latinizations of the Cyrillic version of the original Latin (German) surname, 
in particular, it was spelled as Cetlin, Zetlin, Tzetlin and Tsetlin. Here we use the original Latin spelling.}
subalgebra}. It is a maximal commutative subalgebra of $U_n$, see \cite[Corollary~1]{Ov1}.

A $U_n$-module $M$ is called a {\em Gelfand-Zeitlin module} provided that the action of $\Gamma_n$
on $U_n$ is locally finite. The theory of Gelfand-Zeitlin modules originates in the papers 
\cite{DFO1,DFO2,DFO3,DFO4}, inspired by the description, due to  I.~Gelfand and M.~Zeitlin, 
of a basis in finite dimensional  $U_n$-modules consisting of $\Gamma_n$-eigenvectors in \cite{GZ1}, see also
\cite{GZ2} for a similar result for orthogonal Lie algebras. The main value of the original 
general theory  of Gelfand-Zeitlin modules 
was that it produced a family of simple $U_n$-module which depends on $n(n+1)/2$ complex parameters,
the largest known family of simple $U_n$-modules to date. This theory was generalized to orthogonal
Lie algebras in \cite{Ma3} and to quantum algebras in \cite{MT}. It was also very useful for the
study of various categories of $U_n$-modules, see \cite{Kh,Ma1,Ma4,Ma5,MO,CM,KM,MS}.

A major step in the development of the theory of Gelfand-Zeitlin modules
was made in \cite{Ov1,Ov2} (it was also put into a more general setup in 
\cite{FO}, see also \cite{FMO,FOS}) where it was, in particular, shown that all Gelfand-Zeitlin characters lift to 
$U_n$-modules and that the number of such non-isomorphic simple lifts is finite. This naturally motivated
the question of classification and explicit construction of simple Gelfand-Zeitlin modules. The main
difficulty is to construct and classify so-called {\em singular} Gelfand-Zeitlin modules, that is modules
on which the (rational) coefficients of the classical Gelfand-Zeitlin formulae have potential singularities.
A lot of progress in this direction was made recently in \cite{FGR1,FGR2,FGR3,FGR4,FGR5,FRZ1,FRZ2,GR,RZ,Vi1,Vi2,Za}
using a variety of different methods.

In the present paper we observe that the approach to construct singular simple 
Gelfand-Zeitlin modules
proposed in  \cite{Vi1,Vi2} works, with minimal adjustment, for a much larger class of algebras, 
called {\em orthogonal Gelfand-Zeitlin algebras} which were introduced in \cite{Ma2} and studied in \cite{MPT}. 

Let us now describe the results and the structure of the paper. Let $U$ be an orthogonal Gelfand-Zeitlin algebra
and $\Gamma$ its Gelfand-Zeitlin subalgebra. Our first interesting observation, presented in 
Proposition~\ref{invpol}, is that the regular action of $\Gamma$ on itself extends, via the Gelfand-Zeitlin
formulae, to a faithful action of $U$ on $\Gamma$. Using the standard adjunction argument it follows that 
the dual module $\Gamma^*$ contains each simple $\Gamma$-module as a submodule. This gives a very short 
and non-technical proof
of the original statement \cite[Theorem~2]{Ov2} on existence of Gelfand-Zeitlin $U$-modules for
arbitrary Gelfand-Zeitlin characters. Our arguments and results also work for an arbitrary orthogonal Gelfand-Zeitlin 
algebra, while \cite[Theorem~2]{Ov2} is proved just for $U_n$.

The module $\Gamma^*$ can be used to define, for each fixed Gelfand-Zeitlin character, what
we call a {\em canonical} simple Gelfand-Zeitlin module for this character, see Subsection~\ref{s3.4}.
In full generality, simple Gelfand-Zeitlin modules have not been classified. So,
it is a very unexpected feature that, for a fixed Gelfand-Zeitlin character, 
one can define a simple Gelfand-Zeitlin module in a way which does not involve any choices.

We study  $\Gamma^*$  and its Gelfand-Zeitlin submodules more closely in Section~\ref{s4}.
There we first observe that the Gelfand-Zeitlin formulae can be rewritten using divided difference
operators, see \cite{BGG,De}. 
We use divided difference operators to write down an explicit $U$-submodule  of 
$\Gamma^*$ and show, in Subsection~\ref{s4.5}, that this module is generically simple
and hence also canonical.

Section~\ref{s2} below contains all necessary preliminaries.

\textbf{Acknowledgements:} V.~M. is partially supported by
the Swedish Research Council and G{\"o}ran Gustafsson Stiftelse.
E.~V. is partially  supported by SFB TR 191, Tomsk State University,
Competitiveness Improvement Program and by Programa Institucional de
Aux{\'\i}lio {\`a} Pesquisa de Docentes Rec{\'e}m-Contratados ou Rec{\'e}m-Doutorados,
UFMG 2018.
N.~E. was partially supported by RTG grant NSF/DMS-1148634.
N.~E. thanks Victor Reiner for useful discussions.
We thank the referees for very helpful comments.

\section{Orthogonal Gelfand-Zeitlin algebras}\label{s2}

\subsection{Setup}\label{s2.1}

Let $m$ be a fixed positive integer and $\lambda=(\lambda_1,\lambda_2,\dots,\lambda_k)$ a composition of
$m$ with $k$ non-zero parts. This means that all $\lambda_i$ are positive integers and, moreover, 
$\lambda_1+\lambda_2+\dots+\lambda_k=m$. Denote by $I=I_{\lambda}$ the set of all pairs
$(i,j)$ such that $i\in\{1,2,\dots,k\}$ and $j\in\{1,2,\dots,\lambda_i\}$. Let $\Omega=\Omega_{\lambda}$ 
be the field of rational functions in $m$ variables $x_{\mathbf{a}}$, where $\mathbf{a}\in I$. 
For $\mathbf{a}=(i,j)\in I$, we will use the notation $i_{\mathbf{a}}:=i$
and $j_{\mathbf{a}}:=j$. For $i=1,2,\dots,k$, we also denote by $I^{(i)}$ the set of all 
$\mathbf{a}\in I$ for which $i_{\mathbf{a}}=i$.

For each $\mathbf{a}\in I$, let $\varphi_{\mathbf{a}}$ denote the automorphism of the field extension
$\mathbb{C}\subset \Omega$ which is uniquely defined via
\begin{displaymath}
\varphi_{\mathbf{a}}(x_{\mathbf{b}})=x_{\mathbf{b}}+\delta_{\mathbf{a},\mathbf{b}},\quad
\text{ for all } \quad \mathbf{b}\in I,
\end{displaymath}
where $\delta_{\mathbf{a},\mathbf{b}}$ is the Kronecker symbol. We denote by $\gimel$ the (abelian)
group generated by all $\varphi_{\mathbf{a}}$ with $i_{\mathbf{a}}<k$.

For each $f\in\Omega$, we have the $\mathbf{C}$-linear transformation of $\Omega$ given by 
multiplication with $f$.

\subsection{Definition}\label{s2.2}

For $i=1,2,\dots,k-1$, define the $\mathbb{C}$-linear operators $E_i$ and $F_i$ on $\Omega$ 
by the following {\em Gelfand-Zeitlin formulae}:
\begin{displaymath}
E_i:=\sum_{j=1}^{\lambda_i}   
\frac{\displaystyle \prod_{\mathbf{a}\in I^{(i+1)}}(x_{(i,j)}-x_{\mathbf{a}})}
{\displaystyle \prod_{\mathbf{b}\in I^{(i)}\setminus\{(i,j)\}}(x_{(i,j)}-x_{\mathbf{b}})} \varphi_{(i,j)}\quad
F_i:=\sum_{j=1}^{\lambda_i}   
\frac{\displaystyle \prod_{\mathbf{a}\in I^{(i-1)}}(x_{(i,j)}-x_{\mathbf{a}})}
{\displaystyle \prod_{\mathbf{b}\in I^{(i)}\setminus\{(i,j)\}}(x_{(i,j)}-x_{\mathbf{b}})} \varphi_{(i,j)}^{-1},
\end{displaymath}
where the set $I^{(0)}$ is, by convention, empty and hence the product over this set equals $1$.
For $\mathbf{a}\in I$, we also define the $\mathbb{C}$-linear operator $\gamma_{\mathbf{a}}$ on $\Omega$
as multiplication with the $j_{\mathbf{a}}$-th elementary symmetric polynomial in 
$\{x_{\mathbf{b}}:\mathbf{b}\in I^{(i_{\mathbf{a}})}\}$.

The {\em orthogonal Gelfand-Zeitlin} (OGZ)-algebra $U_{\lambda}$ associated to $\lambda$ 
is the subalgebra of the algebra of all
$\mathbb{C}$-linear transformations of $\Omega$, generated by all $E_i,F_i$, where 
$i=1,2,\dots, k-1$, and all $\gamma_{\mathbf{a}}$, where $\mathbf{a}\in I$, see \cite[Section~3]{Ma2}.

\subsection{Gelfand-Zeitlin subalgebra and Gelfand-Zeitlin modules}\label{s2.3}

The commutative subalgebra $\Gamma_{\lambda}$ of $U_{\lambda}$, generated by all 
$\gamma_{\mathbf{a}}$, where $\mathbf{a}\in I$, is called the {\em Gelfand-Zeitlin subalgebra}.
A $U_{\lambda}$-module $M$ is called a {\em Gelfand-Zeitlin} module provided that  the action of 
$\Gamma_{\lambda}$ on this module is locally finite. By \cite[Corollary~1]{Ma2}, the algebra
$\Gamma_{\lambda}$ is a {\em Harish-Chandra} subalgebra of $U_{\lambda}$ in the sense of \cite{DFO4}.

For a character $\chi:\Gamma_{\lambda}\to\mathbb{C}$ and a Gelfand-Zeitlin module $M$, we denote by
$M(\chi)$ the set of all vectors in $M$ which are annihilated by some power of the kernel of $\chi$.
Then we have
\begin{displaymath}
M=\bigoplus_{\chi}M(\chi). 
\end{displaymath}
We denote by $\mathrm{pr}_{\chi}$ the projection map $M\tto M(\chi)$ with respect to this decomposition.

\subsection{$U_n$ as an OGZ algebra}\label{s2.4}

If we take $m=n(n+1)/2$ and $\lambda=(1,2,\dots,n)$, then $U_n\cong U_{\lambda}$ such that
this isomorphism identifies $\Gamma_n$ with $\Gamma_{\lambda}$, see \cite[Section~4]{Ma2} for details.

\subsection{Group action}\label{s2.5}

Let $G=S_{\lambda_1}\times S_{\lambda_2}\times\dots\times S_{\lambda_k}$. 
Then, for $i=1,2,\dots,k$, the permutation group $S_{\lambda_i}$ permutes
$\{x_{\mathbf{a}}:\mathbf{a}\in I^{(i)}\}$ by acting on the second 
component of the pair $\mathbf{a}$. This defines an action of $G$ on $\Omega$.

Let $R$ denote the polynomial ring in all $x_{\mathbf{a}}$, where $\mathbf{a}\in I$. Then $R$
is a subring of $\Omega$ and $\Omega$ is the field of fractions of the domain $R$. 
The algebra $\Gamma_{\lambda}$ is canonically isomorphic 
to the algebra $\mathbf{R}:=R^G$ of all polynomials in 
$\{x_{\mathbf{a}}:\mathbf{a}\in I\}$ that are invariant with respect to the action of $G$.
In what follows, we will identify the algebras $\Gamma_{\lambda}$ and $\mathbf{R}$.

We also set $\underline{G}:=S_{\lambda_1}\times S_{\lambda_2}\times\dots\times S_{\lambda_{k-1}}$,
which is a subgroup of $G$ in the obvious way.

\section{Action of $U_{\lambda}$ on $\mathbf{R}$ and its consequences}\label{s3}

\subsection{Dual spaces}\label{s3.1}

Let $V$ be the $\mathbb{C}$-vector space spanned by all $x_{\mathbf{a}}$, where $\mathbf{a}\in I$.
Consider the dual vector space $V^*$ of $V$. Each
$\mathtt{v}\in V^*$ gives rise to the {\em evaluation} ring homomorphism $\mathrm{ev}_{\mathtt{v}}:R\tto \mathbb{C}$
by sending each $x_{\mathbf{a}}$, where $\mathbf{a}\in I$, to $\mathtt{v}(x_{\mathbf{a}})$.

As usual, $R^*$ denotes the vector space of all $\mathbb{C}$-linear maps from $R$ to $\mathbb{C}$.
Then the elements   $\mathrm{ev}_{\mathtt{v}}$, where $\mathtt{v}\in V^*$, 
are linearly independent elements in $R^*$. The action of $\gimel$ on $\Omega$ induces an action of
$\gimel$ on both $V^*$ and $\{\mathrm{ev}_{\mathtt{v}}\,:\,\mathtt{v}\in V^*\}$.

For $\mathtt{v}\in V^*$, we have the algebra homomorphism 
$\mathrm{\bf ev}_{\mathtt{v}}:\mathbf{R}\to \mathbb{C}$ given by
\begin{displaymath}
\mathrm{\bf ev}_{\mathtt{v}}:\mathbf{R}\hookrightarrow 
R\overset{\mathrm{ev}_{\mathtt{v}}}{\longrightarrow} \mathbb{C}. 
\end{displaymath}
The corresponding $1$-dimensional $\mathbf{R}$-module is denoted $\mathbb{C}_{\mathtt{v}}$.
Note that, for $\mathtt{v},\mathtt{w}\in V^*$, we have $\mathbb{C}_{\mathtt{v}}\cong \mathbb{C}_{\mathtt{w}}$
if and only if $\mathrm{\bf ev}_{\mathtt{v}}=\mathrm{\bf ev}_{\mathtt{w}}$ if and only if
$\mathtt{v}\in G\cdot \mathtt{w}$. In particular, $\mathbf{R}$ distinguishes $G$-orbits on $V^*$. 
Furthermore, each character of $\mathbf{R}$ is of the form
$\mathrm{\bf ev}_{\mathtt{v}}$, for some $\mathtt{v}\in V^*$. We denote by $\mathbf{m}_{\mathtt{v}}$
the kernel of $\mathrm{\bf ev}_{\mathtt{v}}$ and by $\chi_{\mathtt{v}}$ the quotient map
$\mathbf{R}\tto \mathbf{R}/\mathbf{m}_{\mathtt{v}}\cong\mathbb{C}$.

\subsection{Action  on invariant polynomials}\label{s3.2}

\begin{proposition}\label{invpol}
The algebra  $\mathbf{R}$ 
is invariant under the natural (left) action of $U_{\lambda}$.
\end{proposition}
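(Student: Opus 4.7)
The plan is to verify $U_{\lambda}\cdot\mathbf{R}\subseteq\mathbf{R}$ on a set of generators of $U_{\lambda}$, namely the $\gamma_{\mathbf{a}}$ for $\mathbf{a}\in I$ and the $E_i,F_i$ for $i=1,\dots,k-1$. Each $\gamma_{\mathbf{a}}$ acts by multiplication by an elementary symmetric polynomial in $\{x_{\mathbf{b}}\,:\,\mathbf{b}\in I^{(i_{\mathbf{a}})}\}$, which is $G$-invariant and thus already lies in $\mathbf{R}$; so $\gamma_{\mathbf{a}}$ preserves $\mathbf{R}$ since the latter is a ring. The content of the proposition is therefore to show that $E_i$ and $F_i$ preserve $\mathbf{R}$, and I will describe the argument for $E_i$; the case of $F_i$ is completely analogous.

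Fix $f\in\mathbf{R}$, write $y_j:=x_{(i,j)}$, $N(y):=\prod_{\mathbf{a}\in I^{(i+1)}}(y-x_{\mathbf{a}})$, and let $c_j:=N(y_j)/\prod_{b\neq j}(y_j-y_b)$ denote the coefficient of $\varphi_{(i,j)}$ in $E_i$; note that $N$ is independent of $j$. To check $G$-invariance of $E_i(f)$, note first that for $r\neq i$ both the $c_j$ and the shift operators $\varphi_{(i,j)}$ commute with $S_{\lambda_r}$: the coefficients involve only layers $i$ and $i+1$ and are symmetric in layer $i+1$ via the product $N$, while each $\varphi_{(i,j)}$ only touches layer $i$. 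Hence $S_{\lambda_r}$-invariance of $f$ transfers to $E_i(f)$. For $r=i$, applying $\sigma\in S_{\lambda_i}$ permutes the summands according to the relations $\sigma(c_j)=c_{\sigma(j)}$ and $\sigma\varphi_{(i,j)}\sigma^{-1}=\varphi_{(i,\sigma(j))}$, and combined with $\sigma(f)=f$ this reindexing recovers $E_i(f)$.

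It remains to verify that $E_i(f)$ is a polynomial, i.e.\ lies in $R$ rather than merely in $\Omega$. The only potential singularities are simple poles along the hyperplanes $y_j=y_b$ for $b\neq j$, and by the $S_{\lambda_i}$-symmetry of $E_i(f)$ just established it suffices to check that the polar part along $y_1=y_2$ cancels. Only the $j=1$ and $j=2$ summands contribute, and specialising $y_1=y_2=y$ one finds that their combined residue is proportional to
\begin{displaymath}
f(\dots,y+1,y,y_3,\dots)-f(\dots,y,y+1,y_3,\dots),
\end{displaymath}
where the displayed entries sit in positions $(i,1)$ and $(i,2)$. This vanishes by symmetry of $f$ in those two positions, so $E_i(f)\in R$, and combined with $G$-invariance $E_i(f)\in\mathbf{R}$. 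I expect this polar-cancellation step to be the only real obstacle; it is a mild Lagrange-interpolation style identity, and once it is in hand the proposition follows at once from the fact that $U_{\lambda}$ is generated by the $\gamma_{\mathbf{a}},E_i,F_i$.
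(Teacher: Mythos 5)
Your proof is correct, and its overall shape matches the paper's: reduce to the generators, note that the $\gamma_{\mathbf{a}}$ act by multiplication with $G$-invariant polynomials, and for $E_i$, $F_i$ show that the apparent simple poles along the hyperplanes $x_{(i,j)}=x_{(i,b)}$ cancel. The difference lies in how the cancellation is obtained. The paper observes (without detail) that $g:=E_i\cdot f$ is a $G$-invariant rational function whose denominator is a product of factors $x_{\mathbf{a}}-x_{\mathbf{b}}$ with $\mathbf{a},\mathbf{b}\in I^{(i)}$, each occurring at most once; writing $g=h/(x_{\mathbf{a}}-x_{\mathbf{b}})$, the transposition $(\mathbf{a},\mathbf{b})$ sends $h$ to $-h$, so the classical alternating-function argument gives that $x_{\mathbf{a}}-x_{\mathbf{b}}$ divides $h$ and the pole cancels --- that is, the invariance of the \emph{output} does all the work. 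You instead verify the $G$-invariance of $E_i(f)$ explicitly (filling in a step the paper treats as clear, via $\sigma(c_j)=c_{\sigma(j)}$ and $\sigma\varphi_{(i,j)}\sigma^{-1}=\varphi_{(i,\sigma(j))}$), use it to reduce to the single hyperplane $y_1=y_2$, and then compute the residue there from the two contributing summands, which vanishes by the symmetry of the \emph{input} $f$ in the positions $(i,1)$ and $(i,2)$. Both mechanisms are sound: yours is more computational but makes the Lagrange-interpolation-style cancellation explicit (and would in fact work pair by pair without invoking the symmetry of $E_i(f)$ at all), while the paper's is shorter once the $G$-invariance of $E_i\cdot f$ is granted.
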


\begin{proof}
For $f\in \mathbf{R}$, clearly $\gamma_{\mathbf{a}}\cdot f\in \mathbf{R}$, for any $\mathbf{a}\in I$. If
$i\in\{1,2,\dots,k-1\}$, then both $E_i\cdot f$ and $F_i\cdot f$ are $G$-invariant rational
function whose denominators consist of products of $x_{\mathbf{a}}-x_{\mathbf{b}}$, where
$\mathbf{a}$ and $\mathbf{b}$ are different elements of $I^{(i)}$. 

Let $g$ denote the element $E_i\cdot f$ (or $F_i\cdot f$). Let $\mathbf{a}$ and $\mathbf{b}$ 
be different elements of $I^{(i)}$. We want to prove that $x_{\mathbf{a}}-x_{\mathbf{b}}$
disappears from the denominator of $g$. Note that each denominator in the formulae for 
both $E_i$ and  $F_i$ contains the factor $x_{\mathbf{a}}-x_{\mathbf{b}}$ at most once.
Therefore, we can write
$g=\frac{h}{x_{\mathbf{a}}-x_{\mathbf{b}}}$ such that $h$ is a rational function 
in which $x_{\mathbf{a}}-x_{\mathbf{b}}$ does not appear in the denominator anymore and
which maps to $-h$ after swapping $x_{\mathbf{a}}$ and $x_{\mathbf{b}}$. The classical
description of alternating polynomials then implies that $x_{\mathbf{a}}-x_{\mathbf{b}}$
is a factor of $h$. Hence this factor cancels in the numerator and in the denominator of $g$ 
and the claim follows. 
\end{proof}

Proposition~\ref{invpol} says that the vector space $\Gamma_{\lambda}$ 
carries naturally the structure 
of a left $U_{\lambda}$-module which extends the left regular action of $\Gamma_{\lambda}$ on itself.
We refer to \cite{Ni1,Ni} for similar phenomena for some Lie algebras. Comparing with the main result
of \cite{Ni1}, it would be interesting to classify all possible $U_{\lambda}$-module structures 
on $\Gamma_{\lambda}$ extending the left regular action of $\Gamma_{\lambda}$ on itself.

We note that the $U_{\lambda}$-module $\Gamma_{\lambda}$ is not simple  
as $\Gamma_{\lambda}$ does not  have a central character. 
Indeed, the $S_{\lambda_k}$-invariant polynomials in 
$x_{(k,1)},x_{(k,2)},\dots,x_{(k,\lambda_k)}$ clearly belong to the center of $U_{\lambda}$
and the algebra of such polynomials acts freely on $\Gamma_{\lambda}$.
However, it would be interesting to know for which maximal ideals $\mathbf{m}$ in the 
center of $U_{\lambda}$ the $U_{\lambda}$-module  $\Gamma_{\lambda}/\mathbf{m}\Gamma_{\lambda}$ is simple, 
alternatively, has finite length.

\subsection{Generic regular modules}\label{s3.3}

We denote by $\mathbf{R}^*$ the set of all $\mathbb{C}$-linear maps from $\mathbf{R}$
to $\mathbb{C}$. From Proposition~\ref{invpol} we have that the space $\mathbf{R}^*$
has the natural structure of a right $U_{\lambda}$-module.

Fix some $\mathtt{v}\in V^*$ such that $\mathtt{v}(x_{\mathbf{a}})-
\mathtt{v}(x_{\mathbf{b}})\not\in\mathbb{Z}$, 
for all $i\in\{1,2,\dots,k-1\}$ and all different $\mathbf{a},\mathbf{b}\in I^{(i)}$
(we will call such $\mathtt{v}$ {\em regular}).
Denote by $M_{\mathtt{v}}$ the vector 
subspace of $\mathbf{R}^*$ generated by all elements of the form
$\mathrm{\bf ev}_{\mathtt{w}}$, where $\mathtt{w}\in\gimel\cdot \mathtt{v}$.

\begin{proposition}\label{prop41}
The space  $M_{\mathtt{v}}$ is invariant under the right $U_{\lambda}$-action.
\end{proposition}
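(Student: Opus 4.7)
I plan to verify the invariance by checking that each generator of $U_{\lambda}$, acting on a spanning vector $\mathrm{\bf ev}_{\mathtt{w}}$ with $\mathtt{w}\in\gimel\cdot\mathtt{v}$, lands again in $M_{\mathtt{v}}$. Recall that the induced right action on $\mathbf{R}^{*}$ is $(\mathrm{\bf ev}_{\mathtt{w}}\cdot u)(f)=\mathrm{\bf ev}_{\mathtt{w}}(u\cdot f)$, which makes sense thanks to Proposition~\ref{invpol}. Since $U_{\lambda}$ is generated by the $\gamma_{\mathbf{a}}$, the $E_{i}$, and the $F_{i}$, it suffices to treat these three cases.

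The case of $\gamma_{\mathbf{a}}$ is immediate: it acts on $\mathbf{R}$ by multiplication with a polynomial $p_{\mathbf{a}}\in\mathbf{R}$, so $\mathrm{\bf ev}_{\mathtt{w}}\cdot \gamma_{\mathbf{a}}=p_{\mathbf{a}}(\mathtt{w})\,\mathrm{\bf ev}_{\mathtt{w}}\in M_{\mathtt{v}}$.

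The substance lies in the case of $E_{i}$ (with $F_{i}$ identical up to replacing $\varphi_{(i,j)}$ by $\varphi_{(i,j)}^{-1}$). The Gelfand-Zeitlin formula writes $E_{i}\cdot f$ as a sum $\sum_{j=1}^{\lambda_{i}}e_{j}(x)\,\varphi_{(i,j)}(f)$ of individually rational expressions in $\Omega$ whose total sum is known to lie in $\mathbf{R}$ by Proposition~\ref{invpol}. The key observation I would exploit is that the regularity assumption on $\mathtt{v}$ is preserved by $\gimel$: the group $\gimel$ acts by integer shifts of the coordinates, so $\mathtt{w}(x_{(i,j)})-\mathtt{w}(x_{(i,j')})$ lies in the same $\mathbb{Z}$-coset as $\mathtt{v}(x_{(i,j)})-\mathtt{v}(x_{(i,j')})$, and is therefore non-zero. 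Hence each denominator is invertible at $\mathtt{w}$, and the rational function $E_{i}\cdot f$ may be evaluated at $\mathtt{w}$ term-by-term, giving
\[
\mathrm{\bf ev}_{\mathtt{w}}\cdot E_{i}=\sum_{j=1}^{\lambda_{i}} e_{j}(\mathtt{w})\,\mathrm{\bf ev}_{\varphi_{(i,j)}\cdot\mathtt{w}},
\]
where the shifted evaluation point is interpreted via the induced $\gimel$-action on $V^{*}$ from Subsection~\ref{s3.1}. Since $i\leq k-1$, one has $\varphi_{(i,j)}\in\gimel$, so each $\varphi_{(i,j)}\cdot\mathtt{w}\in\gimel\cdot\mathtt{v}$, and the right-hand side is a $\mathbb{C}$-linear combination of spanning vectors of $M_{\mathtt{v}}$.

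The only delicate step is the termwise evaluation used above, which relies crucially on the regularity hypothesis; the rest of the argument is formal bookkeeping of how the automorphisms $\varphi_{(i,j)}$ act on evaluations, and no obstacle beyond this arises.
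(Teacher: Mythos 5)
Your proof is correct and follows essentially the same route as the paper: the regularity of $\mathtt{v}$ (which, as you note, is preserved along the $\gimel$-orbit since $\gimel$ shifts coordinates by integers) guarantees that all denominators in the Gelfand-Zeitlin formulae evaluate to non-zero numbers at each $\mathtt{w}\in\gimel\cdot\mathtt{v}$, so precomposing $\mathrm{\bf ev}_{\mathtt{w}}$ with $\gamma_{\mathbf{a}}$, $E_i$, $F_i$ yields a linear combination of evaluations at points of the same orbit. The paper's argument is just a terser version of this same term-by-term evaluation.
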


\begin{proof}
We note that our choice of $\mathtt{v}$ ensures that  each $\mathrm{\bf ev}_{\mathtt{w}}$,
where $\mathtt{w}\in\gimel\cdot \mathtt{v}$, evaluates denominators in the formulae for all
$E_i$ and $F_i$ to non-zero elements. Therefore, directly from the definitions, it 
follows that the precomposition of any $\mathrm{\bf ev}_{\mathtt{w}}$ as above with 
any $E_i$, $F_i$ or $\gamma_{\mathbf{a}}$ stays inside $M_{\mathtt{v}}$. The claim follows.
\end{proof}

The modules described in Proposition~\ref{prop41} are the so-called {\em generic regular Gelfand-Zeitlin}
modules over $U_{\lambda}$, cf. \cite[Theorem~24]{DFO4} and \cite[Section~8]{Ma2}. 
These $U_{\lambda}$-modules are indeed 
Gelfand-Zeitlin modules as $\Gamma_{\lambda}$ acts via scalars on each $\mathrm{\bf ev}_{\mathtt{w}}$
by construction. It is easy to see that each $M_{\mathtt{v}}$ as above has finite length.
Its simple subquotients can be described using the following combinatorial object: 
\begin{itemize}
\item Identify $\gimel$, and hence also $\gimel\cdot \mathtt{v}$  with 
$\mathbb{Z}^k$, for an appropriate $k$. 
\item Consider this as the set of vertices of an unoriented graph
where edges correspond to adding vectors from the standard basis. 
\item Remove all edges which correspond to vanishing of the numerators
of Gelfand-Zeitlin formulae.
\end{itemize}
Then simple subquotients of $M_{\mathtt{v}}$ are in bijection with connected
components of the obtained (infinite) graph.

We also note that the above construction does not work if $\mathtt{v}(x_{\mathbf{a}})-
\mathtt{v}(x_{\mathbf{b}})\in\mathbb{Z}$, for some $\mathbf{a}$ and $\mathbf{b}$ in the same $I^{(i)}$, for some $i\in\{1,2,\dots,k-1\}$. Indeed, in this case
$\mathrm{\bf ev}_{\mathtt{v}}$ evaluates to zero the denominators of some of the 
coefficients in the Gelfand-Zeitlin formulae and the whole construction collapses. 

\begin{corollary}\label{corfaithful}
The $U_{\lambda}$-module $\Gamma_{\lambda}$ is faithful. 
\end{corollary}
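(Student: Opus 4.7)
The plan is to use the right action of $U_{\lambda}$ on $\mathbf{R}^*$ together with the generic regular modules $M_{\mathtt{v}}$ from Proposition~\ref{prop41} as a ``testing device.'' Suppose $u\in U_{\lambda}$ satisfies $u\cdot f=0$ for every $f\in\mathbf{R}$. Then for every $\phi\in\mathbf{R}^*$, one has $(\phi\cdot u)(f)=\phi(u\cdot f)=0$, so $u$ acts as zero on the entire right $U_{\lambda}$-module $\mathbf{R}^*$, and in particular on each submodule $M_{\mathtt{v}}$.

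The first key structural input is that every element $u\in U_{\lambda}$ admits a finite expansion
\begin{displaymath}
u=\sum_{\sigma\in\gimel} f_{\sigma}\,\sigma, \qquad f_{\sigma}\in\Omega,
\end{displaymath}
where only finitely many $f_{\sigma}$ are nonzero. This is because all generators $\gamma_{\mathbf{a}}$, $E_i$, $F_i$ of $U_{\lambda}$ are of this form, and the set of such operators inside $\mathrm{End}_{\mathbb{C}}(\Omega)$ (i.e.\ the skew-group algebra $\Omega\rtimes\gimel$) is closed under composition via $(f\sigma)(g\tau)=f\cdot\sigma(g)\,\sigma\tau$. It suffices to prove that all $f_{\sigma}$ are zero.

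The second step is a direct evaluation. Using that $\mathrm{ev}_{\mathtt{v}}\bigl(\sigma(g)\bigr)=g(\sigma\mathtt{v})$ for $g\in R$ and $\sigma\in\gimel$, one computes, for any regular $\mathtt{v}\in V^*$ and any $f\in\mathbf{R}$,
\begin{displaymath}
(\mathrm{\bf ev}_{\mathtt{v}}\cdot u)(f)
=\mathrm{\bf ev}_{\mathtt{v}}\Bigl(\sum_{\sigma} f_{\sigma}\,\sigma(f)\Bigr)
=\sum_{\sigma} f_{\sigma}(\mathtt{v})\,\mathrm{\bf ev}_{\sigma\mathtt{v}}(f),
\end{displaymath}
so $\mathrm{\bf ev}_{\mathtt{v}}\cdot u=\sum_{\sigma} f_{\sigma}(\mathtt{v})\,\mathrm{\bf ev}_{\sigma\mathtt{v}}$ in $\mathbf{R}^*$. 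Choose $\mathtt{v}$ generic in the sense that it is regular and the orbits $G\cdot(\sigma\mathtt{v})$, for the finitely many $\sigma$ appearing in $u$, are pairwise distinct; this is a Zariski dense condition. By the fact recalled in Subsection~\ref{s3.1} that $\mathbf{R}$ separates $G$-orbits on $V^*$, the corresponding characters $\mathrm{\bf ev}_{\sigma\mathtt{v}}$ are linearly independent. Combined with $\mathrm{\bf ev}_{\mathtt{v}}\cdot u=0$, this gives $f_{\sigma}(\mathtt{v})=0$ for each $\sigma$.

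Since the equalities $f_{\sigma}(\mathtt{v})=0$ hold on a Zariski dense subset of $V^*$, each rational function $f_{\sigma}$ vanishes identically, hence $u=0$. The only conceptually nontrivial point is the skew-group-algebra decomposition of elements of $U_{\lambda}$; everything else is a direct application of Proposition~\ref{invpol}, Proposition~\ref{prop41}, and orbit separation.
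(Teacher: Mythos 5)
Your proof is correct and is essentially the paper's own argument: both decompose $u$ as a finite sum $\sum_{\sigma\in\gimel}f_{\sigma}\sigma$ in the skew group algebra and use that regularity of $\mathtt{v}$ forces the shifted points to lie in pairwise distinct $G$-orbits, so the evaluations are linearly independent and the coefficients must vanish. The only (cosmetic) difference is that the paper picks a single regular $\mathtt{v}$ at which all non-zero $f_{\sigma}$ evaluate to non-zero values, whereas you let $\mathtt{v}$ range over a Zariski dense set (where you should also explicitly require $\mathtt{v}$ to avoid the poles of the finitely many $f_{\sigma}$) and conclude $f_{\sigma}\equiv 0$ by density.
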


\begin{proof}
Consider the annihilator $J$ of $\Gamma_{\lambda}=\mathbf{R}$. Then $J$ annihilates 
$\mathbf{R}^*$ as well, in particular, $J$ annihilates all generic regular
Gelfand-Zeitlin modules. Let $u\in U_{\lambda}$ be a non-zero element.
Write $u$ in the form $\displaystyle \sum_{\gamma\in\gimel}f_{\gamma}\gamma$, where
$f_{\gamma}\in\Omega$. Clearly, there exists a regular $\mathtt{v}$ such that 
$\mathrm{ev}_{\mathtt{v}}$ evaluates all non-zero $f_{\gamma}$ to non-zero complex numbers.
Since $\mathtt{v}$ is regular, all $\gamma(\mathtt{v})$ automatically belong to pairwise 
different orbits of $G$. 
This implies that the action of $u$ on $M_{\mathtt{v}}$
is non-zero. Therefore $u\not\in J$ and thus $J=0$, proving the claim. 
\end{proof}

\subsection{Existence of singular modules}\label{s3.4}

The following result generalizes \cite[Theorem~2]{Ov2} to our setup.

\begin{corollary}\label{cor37}
Any character of $\Gamma_{\lambda}$ extends to a non-zero right $U_{\lambda}$-module. 
\end{corollary}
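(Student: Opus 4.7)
The plan is to deduce this immediately from Proposition~\ref{invpol} via a duality argument, without any of the singularity analysis that appears in \cite{Ov2}.

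First I would recall from Subsection~\ref{s3.1} that every character of $\Gamma_{\lambda}=\mathbf{R}$ has the form $\mathrm{\bf ev}_{\mathtt{v}}$ for some $\mathtt{v}\in V^*$. By Proposition~\ref{invpol}, the vector space $\mathbf{R}$ is a left $U_{\lambda}$-module, and so the dual space $\mathbf{R}^*$ inherits the structure of a right $U_{\lambda}$-module in the usual way, namely $(\xi\cdot u)(f)=\xi(u\cdot f)$ for $\xi\in\mathbf{R}^*$, $u\in U_{\lambda}$, $f\in\mathbf{R}$.

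Next I would check that the element $\mathrm{\bf ev}_{\mathtt{v}}\in\mathbf{R}^*$ is a $\Gamma_{\lambda}$-eigenvector affording precisely the character $\chi_{\mathtt{v}}=\mathrm{\bf ev}_{\mathtt{v}}$. This is immediate: for $\gamma\in\Gamma_{\lambda}$, the action of $\gamma$ on $\mathbf{R}$ is just multiplication by $\gamma$, so
\begin{displaymath}
(\mathrm{\bf ev}_{\mathtt{v}}\cdot\gamma)(f)=\mathrm{\bf ev}_{\mathtt{v}}(\gamma f)=\mathrm{\bf ev}_{\mathtt{v}}(\gamma)\,\mathrm{\bf ev}_{\mathtt{v}}(f)=\chi_{\mathtt{v}}(\gamma)\,\mathrm{\bf ev}_{\mathtt{v}}(f)
\end{displaymath}
for every $f\in\mathbf{R}$, so that $\mathrm{\bf ev}_{\mathtt{v}}\cdot\gamma=\chi_{\mathtt{v}}(\gamma)\,\mathrm{\bf ev}_{\mathtt{v}}$.

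Finally, I would take the cyclic right $U_{\lambda}$-submodule $N_{\mathtt{v}}:=\mathrm{\bf ev}_{\mathtt{v}}\cdot U_{\lambda}\subseteq\mathbf{R}^*$. It is non-zero since it contains the non-zero functional $\mathrm{\bf ev}_{\mathtt{v}}=\mathrm{\bf ev}_{\mathtt{v}}\cdot 1$, which moreover is a $\Gamma_{\lambda}$-eigenvector realizing the given character. This proves the corollary.

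There is essentially no obstacle here; the work has already been done in Proposition~\ref{invpol}. The only conceptual point to verify is that the left $U_{\lambda}$-action on $\mathbf{R}$ from Proposition~\ref{invpol} genuinely extends the left regular $\Gamma_{\lambda}$-action (so that $\gamma\cdot f=\gamma f$ for $\gamma\in\Gamma_{\lambda}$, $f\in\mathbf{R}$), which is clear from the definitions since each $\gamma_{\mathbf{a}}$ acts on $\Omega$ by multiplication.
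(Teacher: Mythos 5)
Your proposal is correct and is essentially the paper's own argument: dualize the $U_{\lambda}$-action on $\mathbf{R}$ from Proposition~\ref{invpol}, observe that $\Gamma_{\lambda}$ acts on $\mathrm{\bf ev}_{\mathtt{v}}$ by the scalars of $\chi_{\mathtt{v}}$, and take the cyclic submodule generated by $\mathrm{\bf ev}_{\mathtt{v}}$, with all characters exhausted as $\mathtt{v}$ varies. The only difference is that you spell out the eigenvector computation explicitly, which the paper leaves as ``by construction.''
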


\begin{proof}
For any $\mathtt{v}\in V^*$, the $U_{\lambda}$-submodule of the
right $U_{\lambda}$ module $\mathbf{R}^*$ generated by $\mathrm{\bf ev}_{\mathtt{v}}$ contains 
the non-zero element $\mathrm{\bf ev}_{\mathtt{v}}$. By construction, the algebra $\Gamma_{\lambda}$
acts on $\mathrm{\bf ev}_{\mathtt{v}}$ via scalars prescribed by $\mathtt{v}$. As $\Gamma_{\lambda}$
is identified with $\mathbf{R}$, varying $\mathtt{v}$ will exhaust all
characters of $\Gamma_{\lambda}$. The claim follows.
\end{proof}

From \cite[Corollary~1]{Ma2} and \cite[Subsection~1.4]{DFO4} it follows that the module constructed in 
the proof of Corollary~\ref{cor37} is, in fact, a Gelfand-Zeitlin module.

The assertion of Corollary~\ref{cor37} can be strengthened as follows.

\begin{proposition}\label{prop38}
For each $\mathtt{v}\in V^*$, the element $\mathrm{\bf ev}_{\mathtt{v}}$ is, up to a
scalar, the unique 
element of $\mathbf{R}^*$ annihilated by $\mathbf{m}_{\mathtt{v}}$.
\end{proposition}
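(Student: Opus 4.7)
The plan is to unfold what it means for $\phi\in\mathbf{R}^*$ to be annihilated by $\mathbf{m}_{\mathtt{v}}$ under the right $U_\lambda$-action restricted to $\Gamma_\lambda=\mathbf{R}$, and to reduce the statement to the elementary fact that $\mathbf{m}_{\mathtt{v}}$ is a maximal ideal of codimension one in $\mathbf{R}$.

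First I would recall that, by definition of the dual module, the right action of $r\in\mathbf{R}$ on $\phi\in\mathbf{R}^*$ is given by $(\phi\cdot r)(s)=\phi(rs)$ for all $s\in\mathbf{R}$. Thus the condition that $\phi$ is annihilated by $\mathbf{m}_{\mathtt{v}}$ reads
\begin{displaymath}
\phi(rs)=0\quad\text{for all } r\in\mathbf{m}_{\mathtt{v}},\ s\in\mathbf{R}.
\end{displaymath}
Setting $s=1$ shows that $\phi$ vanishes on $\mathbf{m}_{\mathtt{v}}$. Conversely, since $\mathbf{m}_{\mathtt{v}}$ is an ideal, $rs\in\mathbf{m}_{\mathtt{v}}$ whenever $r\in\mathbf{m}_{\mathtt{v}}$, so vanishing of $\phi$ on $\mathbf{m}_{\mathtt{v}}$ already implies the full annihilation condition. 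Hence the space in question equals the annihilator of $\mathbf{m}_{\mathtt{v}}$ in $\mathbf{R}^*$ in the sense of linear functionals.

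Next I would use that $\mathrm{\bf ev}_{\mathtt{v}}$ is a surjection $\mathbf{R}\tto\mathbb{C}$, so $\mathbf{R}/\mathbf{m}_{\mathtt{v}}\cong\mathbb{C}$ and, as vector spaces, $\mathbf{R}=\mathbf{m}_{\mathtt{v}}\oplus\mathbb{C}\cdot 1$. Any $\phi$ vanishing on $\mathbf{m}_{\mathtt{v}}$ is therefore determined by its value on $1$, which places it in the one-dimensional space $(\mathbf{R}/\mathbf{m}_{\mathtt{v}})^*$. Since $\mathrm{\bf ev}_{\mathtt{v}}$ itself clearly lies in this space and is non-zero, the annihilator coincides with $\mathbb{C}\cdot\mathrm{\bf ev}_{\mathtt{v}}$, which is the claim.

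There is essentially no obstacle here: once the right action is written out explicitly, the statement is a purely formal consequence of $\mathbf{m}_{\mathtt{v}}$ being a codimension-one ideal. The only point that needs any care is to notice that the annihilation condition under the module action collapses, via the presence of the unit, to the simpler condition of vanishing on $\mathbf{m}_{\mathtt{v}}$ as a subspace.
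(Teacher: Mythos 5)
Your proof is correct and is essentially the paper's argument in unpacked form: the paper reduces uniqueness via the adjunction $\mathrm{Hom}_{\Gamma_{\lambda}}\big(\mathbb{C}_{\mathtt{v}},\mathbf{R}^*\big)\cong \mathrm{Hom}_{\mathbb{C}}\big(\mathbb{C}_{\mathtt{v}}\otimes_{\Gamma_{\lambda}}\mathbf{R},\mathbb{C}\big)$ together with $\mathbb{C}_{\mathtt{v}}\otimes_{\Gamma_{\lambda}}\mathbf{R}\cong\mathbb{C}_{\mathtt{v}}$, which is precisely your elementary observation that being annihilated by $\mathbf{m}_{\mathtt{v}}$ means vanishing on $\mathbf{m}_{\mathtt{v}}$, so the space in question is $(\mathbf{R}/\mathbf{m}_{\mathtt{v}})^*\cong\mathbb{C}$, spanned by $\mathrm{\bf ev}_{\mathtt{v}}$. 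Both arguments rest on the same single fact, namely that $\mathbf{m}_{\mathtt{v}}$ has codimension one in $\mathbf{R}$, so your version is fine as it stands.
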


\begin{proof}
The fact that $\Gamma_{\lambda}$ acts on $\mathrm{\bf ev}_{\mathtt{v}}$
via scalars prescribed by $\mathtt{v}$ follows directly from the definitions.
So, we just need to prove the uniqueness. Our proof follows the argument from \cite[Proposition~2]{Ni}.
Using adjunction, we compute:
\begin{displaymath}
\begin{array}{rcl}
\mathrm{Hom}_{\Gamma_{\lambda}}\big(\mathbb{C}_{\mathtt{v}},\mathbf{R}^*\big)&=&
\mathrm{Hom}_{\Gamma_{\lambda}}\big(\mathbb{C}_{\mathtt{v}},\mathrm{Hom}_{\mathbb{C}}(\mathbf{R},\mathbb{C})\big)\\
&\cong&
\mathrm{Hom}_{\mathbb{C}}\big(\mathbb{C}_{\mathtt{v}}\otimes_{\Gamma_{\lambda}}\mathbf{R},\mathbb{C}\big)
\end{array}
\end{displaymath}
and the claim of the proposition follows from the observation that $\mathbb{C}_{\mathtt{v}}\otimes_{\Gamma_{\lambda}}\mathbf{R}\cong \mathbb{C}_{\mathtt{v}}$ as $\mathbf{R}$
is a free $\Gamma_{\lambda}$-module of rank one.
\end{proof}

A similar adjunction argument shows that a simple right $U_{\lambda}$-module $M$ is a submodule of 
$\mathbf{R}^*$ if and only if $M\otimes_{U_{\lambda}}\mathbf{R}\neq 0$.

For each $\mathtt{v}\in V^*$, we denote by $N_{\mathtt{v}}$ the $U_{\lambda}$-submodule 
of $\mathbf{R}^*$ generated by 
$\mathrm{\bf ev}_{\mathtt{v}}$, that is $N_{\mathtt{v}}:=\mathrm{\bf ev}_{\mathtt{v}}\cdot U_{\lambda}$.

\begin{proposition}\label{prop81}
The module  $N_{\mathtt{v}}$ has a unique maximal submodule. 
\end{proposition}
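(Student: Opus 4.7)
\emph{Plan of proof.} The plan is to show that the sum of all proper $U_{\lambda}$-submodules of the cyclic module $N_{\mathtt{v}}=\mathrm{\bf ev}_{\mathtt{v}}\cdot U_{\lambda}$ is itself proper; this sum is then automatically the unique maximal submodule. Since $\mathrm{\bf ev}_{\mathtt{v}}$ generates $N_{\mathtt{v}}$, a submodule is proper precisely when it does not contain $\mathrm{\bf ev}_{\mathtt{v}}$, so the task reduces to ruling out $\mathrm{\bf ev}_{\mathtt{v}}\in K$, where $K$ denotes the sum of all proper submodules.

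First I would invoke the remark following Corollary~\ref{cor37} that $N_{\mathtt{v}}$ is a Gelfand-Zeitlin module, so $\Gamma_{\lambda}$ acts locally finitely and we have the generalized weight space decomposition $N_{\mathtt{v}}=\bigoplus_{\chi}N_{\mathtt{v}}(\chi)$. In particular $\mathrm{\bf ev}_{\mathtt{v}}\in N_{\mathtt{v}}(\chi_{\mathtt{v}})$, and since every $U_{\lambda}$-submodule $M$ of $N_{\mathtt{v}}$ is in particular $\Gamma_{\lambda}$-stable and hence decomposes as $M=\bigoplus_{\chi}(M\cap N_{\mathtt{v}}(\chi))$, it suffices to show that every proper $U_{\lambda}$-submodule $M$ satisfies $M\cap N_{\mathtt{v}}(\chi_{\mathtt{v}})=0$. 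The sum $K$ of all such $M$ will then also have trivial $\chi_{\mathtt{v}}$-component and therefore miss $\mathrm{\bf ev}_{\mathtt{v}}$.

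The key step is a socle argument leaning on Proposition~\ref{prop38}. By that proposition, the subspace of $\mathbf{R}^{*}$ annihilated by $\mathbf{m}_{\mathtt{v}}$ is $\mathbb{C}\cdot\mathrm{\bf ev}_{\mathtt{v}}$, so the $\Gamma_{\lambda}$-socle of $N_{\mathtt{v}}(\chi_{\mathtt{v}})$ is exactly this one-dimensional space. Now let $M$ be any proper $U_{\lambda}$-submodule. Then $M\cap N_{\mathtt{v}}(\chi_{\mathtt{v}})$ is a $\Gamma_{\lambda}$-stable subspace each of whose elements is annihilated by some power of $\mathbf{m}_{\mathtt{v}}$. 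If it were nonzero, picking $0\neq w$ in it together with minimal $k\geq 1$ such that $\mathbf{m}_{\mathtt{v}}^{k}\cdot w=0$, the element $\mathbf{m}_{\mathtt{v}}^{k-1}\cdot w$ would lie in $M$ and be a nonzero scalar multiple of $\mathrm{\bf ev}_{\mathtt{v}}$; but then $\mathrm{\bf ev}_{\mathtt{v}}\in M$, forcing $M=N_{\mathtt{v}}$, a contradiction.

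Combining these steps gives $K\cap N_{\mathtt{v}}(\chi_{\mathtt{v}})=0$, so $\mathrm{\bf ev}_{\mathtt{v}}\notin K$ and $K$ is the desired unique maximal submodule. The one point that deserves care is the local finiteness of the $\Gamma_{\lambda}$-action on $N_{\mathtt{v}}$, which underwrites the generalized weight space decomposition used throughout; this is exactly what the Harish-Chandra property of $\Gamma_{\lambda}\subset U_{\lambda}$ provides via the remark after Corollary~\ref{cor37}. Everything else is a formal consequence of the uniqueness statement in Proposition~\ref{prop38} together with the standard fact that in a module all of whose vectors are annihilated by a power of a fixed maximal ideal, every nonzero submodule meets the socle.
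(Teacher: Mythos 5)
Your argument is correct and is essentially the paper's own (sketched) proof: the paper defines the maximal submodule as the sum of all submodules not containing $\mathrm{\bf ev}_{\mathtt{v}}$ and refers to Dixmier's Verma-module argument, and your write-up supplies exactly those details, using the Gelfand-Zeitlin decomposition together with the uniqueness statement of Proposition~\ref{prop38} as the socle input. The only nitpicks are notational: the action is on the right, and $\mathbf{m}_{\mathtt{v}}^{k-1}\cdot w$ is a subspace rather than a single element, so one should pick a nonzero vector in it — neither affects the argument.
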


\begin{proof}
The unique maximal submodule of  $N_{\mathtt{v}}$ is the sum of all submodules $M$ of $N_{\mathtt{v}}$
with the property  $\mathrm{\bf ev}_{\mathtt{v}}\not\in M$. The detailed proof is similar to
the proof of \cite[Proposition~7.1.11]{Di}.
\end{proof}

The quotient of $N_{\mathtt{v}}$ by its unique maximal submodule will be denoted $L_{\mathtt{v}}$ and
will be called the {\em canonical}  simple Gelfand-Zeitlin module associated to $\mathtt{v}$.
Our terminology is motivated by the fact that the construction of $L_{\mathtt{v}}$ does not use any choice and is
given purely in terms of the original Gelfand-Zeitlin formulae and dualization.
Clearly, $L_{\mathtt{v}}\cong L_{\mathtt{w}}$ if $\mathtt{v}\in G\cdot \mathtt{w}$. However, we
do not know the necessary and sufficient condition on $\mathtt{v}$ and $\mathtt{w}$ 
for $L_{\mathtt{v}}$ and $L_{\mathtt{w}}$ to be isomorphic. 

Generically, for $\mathtt{v}\in V^*$, there exists a unique simple Gelfand-Zeitlin module containing a 
non-zero element annihilated by $\mathbf{m}_{\mathtt{v}}$. This module is then automatically 
canonical. This is the case, for example, in the generic regular situation.

\section{Singular Gelfand-Zeitlin modules}\label{s4}

\subsection{Divided difference operators}\label{s4.1}

Let $\mathbf{a},\mathbf{b}\in I^{(i)}$ be an ordered pair of different elements, 
for some $i<k$, and $(\mathbf{a},\mathbf{b})$ denote the transposition 
swapping $\mathbf{a}$ and $\mathbf{b}$. Then we have, see \cite{BGG,De}, the corresponding 
{\em divided difference operator} $\partial_{\mathbf{a},\mathbf{b}}:\Omega\to \Omega$ given by
\begin{displaymath}
\partial_{\mathbf{a},\mathbf{b}}=\frac{\mathrm{id}-(\mathbf{a},\mathbf{b})}{(x_{\mathbf{a}}-x_{\mathbf{b}})}.
\end{displaymath}
The operators $\partial_{\mathbf{a},\mathbf{b}}$ satisfy the following {\em Leibniz rule}:
\begin{displaymath}
\partial_{\mathbf{a},\mathbf{b}}(fg)=\partial_{\mathbf{a},\mathbf{b}}(f)g+ 
f^{(\mathbf{a},\mathbf{b})}\partial_{\mathbf{a},\mathbf{b}}(g),
\end{displaymath}
where $f^{(\mathbf{a},\mathbf{b})}$ denotes the outcome of the action of $(\mathbf{a},\mathbf{b})$ on $f$.
This Leibniz rule implies the following variation which should be understood
as an equality of operators acting on $\Omega$, where $\gamma\in\gimel$,
\begin{displaymath}
\partial_{\mathbf{a},\mathbf{b}}\circ f\circ\gamma=\partial_{\mathbf{a},\mathbf{b}}(f)\circ\gamma+ 
f^{(\mathbf{a},\mathbf{b})}\circ\partial_{\mathbf{a},\mathbf{b}}\circ\gamma. 
\end{displaymath}
We also have $\partial_{\mathbf{a},\mathbf{b}}=-\partial_{\mathbf{b},\mathbf{a}}$. 

For a fixed linear order $\prec$ on $I^{(i)}$, we set
\begin{displaymath}
\partial_{(\mathbf{a},\mathbf{b})}=
\begin{cases}
\partial_{\mathbf{a},\mathbf{b}}, &  \mathbf{a}\prec \mathbf{b};\\
\partial_{\mathbf{b},\mathbf{a}}, &  \mathbf{b}\prec \mathbf{a}.
\end{cases}
\end{displaymath}
The order $\prec$ allows us to view $G$ as a Coxeter group such that simple reflections are given by 
those transpositions which swap neighboring elements with respect to $\prec$.
Then the divided difference operators which correspond 
to simple reflections satisfy the defining relations of the nil-Coxeter algebra. In particular, 
the dimension of the algebra which such operators generate coincides with the 
cardinality of $G$. Furthermore, to each $w\in G$ 
with a fixed reduced expression $w=s_1s_2\cdots s_l$, we can associate the element
$\partial_w=\partial_{s_1}\partial_{s_2}\cdots \partial_{s_l}$ and this element will not depend
on the reduced expression, see \cite[Theorem~3.4~b)]{BGG}. 
If the expression is not reduced, then
$\partial_w=0$, see \cite[Theorem~3.4~a)]{BGG}.
The elements $\partial_w$, for $w\in G$,
are linearly independent as operators on $R$.
We define the {\em degree} of $\partial_w$ as the length of $w$.
A similar construction works for any parabolic subgroup of $G$ viewed as a Coxeter group 
in the natural way.

For $u\in G$ and simple reflection $s$, we set $\partial^{(u)}_{s}=u\partial_{s}u^{-1}$
and, for $w\in G$, define $\partial^{(u)}_w=\partial^{(u)}_{s_1}\partial^{(u)}_{s_2}\cdots \partial^{(u)}_{s_l}$,
where $w=s_1s_2\cdots s_l$ is a reduced expression.

\subsection{Realization of $U_{\lambda}$ via divided difference operators}\label{s4.2}
The results of this subsection are partially inspired by \cite{Early1}.

For a fixed element $i\in\{1,2,\dots,k-1\}$, set, for simplicity, $m=\lambda_i$. Let
$\prec$ be a fixed linear order on $\{1,2,\dots,m\}=\{a_1,a_2,\dots,a_m\}$, where
$a_{1}\prec a_{2}\prec \dots\prec a_{m}$. Let $\mu$ be a composition of $m$ with 
non-zero parts. We identify $\mu$ with the following decomposition of $\{1,2,\dots,m\}$ into subsets:
\begin{displaymath}
\{a_1,a_2,\dots,a_{\mu_1}\},\quad\{a_{\mu_1+1},a_{\mu_1+2},\dots,a_{\mu_1+\mu_2}\},\quad\dots\quad. 
\end{displaymath}
Let 
\begin{displaymath}
\underline{\mu_j}:=\{a_{\mu_1+\mu_2+\dots+\mu_{j-1}+1},a_{\mu_1+\mu_2+\dots+\mu_{j-1}+2},\dots,
a_{\mu_1+\mu_2+\dots+\mu_{j}}\} 
\end{displaymath}
denote the subset corresponding to the part $\mu_j$ (here $\mu_0=0$ by convention).

For a part $\mu_j$, we set $\min(\mu_j)=\mu_1+\mu_2+\dots+\mu_{j-1}+1$. This is the index of the minimal 
element  in $\underline{\mu_j}$ with respect to $\prec$. If $\mu_j=1$, we define
$\partial(\mu,j)$ to be the identity operator. If $\mu_j>1$, we define 
$\partial(\mu,j)$ as the operator
\begin{displaymath}
\partial_{(a_{\min(\mu_j)+\mu_j-2},a_{\min(\mu_j)+\mu_j-1})}\circ\cdots\circ
\partial_{(a_{\min(\mu_j)+1},a_{\min(\mu_j)+2})}\circ\partial_{(a_{\min(\mu_j)},a_{\min(\mu_j)+1})}. 
\end{displaymath}
Finally, we denote by  $f(\mu,j)^{\pm}$ the following rational function:
\begin{displaymath}
f(\mu,j)^{\pm}:=\frac{\displaystyle \prod_{\mathbf{a}\in I^{(i\pm 1)}}(x_{(i,a_{\min(\mu_j)})}-x_{\mathbf{a}})}{
\displaystyle \prod_{\mathbf{b}\in I^{(i)}\setminus\underline{\mu_j}}(x_{(i,a_{\min(\mu_j)})}-x_{\mathbf{b}})}.
\end{displaymath}

\begin{proposition}\label{prop73}
The restriction of the action of the
generators $E_i$ and $F_i$ of $U_{\lambda}$ from $\Omega$ to $\mathbf{R}$ 
is given by the following operators:
\begin{gather*}
E_i=\sum_j \partial(\mu,j)f(\mu,j)^{+}\varphi_{(i,\min(\mu_j))},\\
F_i=\sum_j \partial(\mu,j)f(\mu,j)^{-}\varphi_{(i,\min(\mu_j))}^{-1}.
\end{gather*}
\end{proposition}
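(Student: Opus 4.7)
I plan to prove the formula for $E_i$; the formula for $F_i$ follows by a completely analogous argument with $I^{(i-1)}$ in place of $I^{(i+1)}$ and inverse shifts $\varphi^{-1}$ in place of $\varphi$. The strategy is to reindex the original Gelfand-Zeitlin sum according to the chosen composition $\mu$, writing $\sum_{r=1}^{\lambda_i}$ as $\sum_j\sum_{r\in\underline{\mu_j}}$, and to establish, for each $j$ separately, the per-part operator identity
\[
\sum_{r\in\underline{\mu_j}} \frac{\prod_{\mathbf{a}\in I^{(i+1)}}(x_{(i,r)}-x_{\mathbf{a}})}{\prod_{\mathbf{b}\in I^{(i)}\setminus\{(i,r)\}}(x_{(i,r)}-x_{\mathbf{b}})}\,\varphi_{(i,r)}
\;=\;\partial(\mu,j)\,f(\mu,j)^{+}\,\varphi_{(i,\min(\mu_j))}
\]
as operators on $\mathbf{R}$. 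Summing these identities over $j$ then recovers the original Gelfand-Zeitlin formula.

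To prove the per-part identity, fix $j$ and set $\alpha:=\min(\mu_j)$, $n:=\mu_j$, $J:=\underline{\mu_j}$. Since $\mathbf{R}$ consists of $G$-invariant polynomials and $S_J\subset G$, every $g\in\mathbf{R}$ is $S_J$-invariant, so it suffices to check the identity on $S_J$-invariants. I proceed by induction on $n$. The base case $n=1$ is immediate since $\partial(\mu,j)$ is the identity and $f(\mu,j)^{+}$ equals the Gelfand-Zeitlin coefficient at $r=\alpha$. For the inductive step, I apply the $n-1$ divided differences comprising $\partial(\mu,j)$ one at a time, from innermost to outermost, to the operator $f(\mu,j)^{+}\,\varphi_{(i,a_\alpha)}$. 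The extended Leibniz rule quoted in the excerpt, combined with the commutation $s_{a,a+1}\circ\varphi_a=\varphi_{a+1}\circ s_{a,a+1}$ and $S_J$-invariance of $g$, ensures that each new application of a divided difference splits the current ``frontier'' shift term into two consecutive shift terms, while merely modifying the coefficients of all earlier shift terms. Shifts $\varphi_r$ whose index is disjoint from the active pair $\{a,a+1\}$ commute through the divided difference, whose residual $\partial_{a,a+1}$ then annihilates the $S_J$-invariant $g$.

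The main obstacle is the coefficient bookkeeping. The precise inductive claim is that, after $k$ divided differences have been applied, the coefficient of $\varphi_{(i,a_{\alpha+l})}$ for $0\le l\le k$ equals
\[
\frac{f_l^+}{\prod_{m\in\{0,\ldots,k\}\setminus\{l\}}(x_{(i,a_{\alpha+l})}-x_{(i,a_{\alpha+m})})},
\]
where $f_l^+$ is obtained from $f(\mu,j)^{+}$ by replacing $x_{(i,a_\alpha)}$ with $x_{(i,a_{\alpha+l})}$. The inductive step hinges on the elementary identity
\[
\frac{1}{x_{(i,a_{\alpha+l})}-x_{(i,a_{\alpha+k})}}-\frac{1}{x_{(i,a_{\alpha+l})}-x_{(i,a_{\alpha+k+1})}}=\frac{x_{(i,a_{\alpha+k})}-x_{(i,a_{\alpha+k+1})}}{(x_{(i,a_{\alpha+l})}-x_{(i,a_{\alpha+k})})(x_{(i,a_{\alpha+l})}-x_{(i,a_{\alpha+k+1})})},
\]
whose numerator cancels with the factor $x_{(i,a_{\alpha+k})}-x_{(i,a_{\alpha+k+1})}$ in the denominator of the divided difference. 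At the final stage, the accumulated denominator factors $\prod_{s\in J\setminus\{a_{\alpha+l}\}}(x_{(i,a_{\alpha+l})}-x_{(i,s)})$, together with the denominator of $f_l^+$ (namely $\prod_{\mathbf{b}\in I^{(i)}\setminus J}(x_{(i,a_{\alpha+l})}-x_{\mathbf{b}})$), combine to yield $\prod_{\mathbf{b}\in I^{(i)}\setminus\{(i,a_{\alpha+l})\}}(x_{(i,a_{\alpha+l})}-x_{\mathbf{b}})$, so the coefficient reduces to the Gelfand-Zeitlin coefficient at $r=a_{\alpha+l}$ and the proposition follows.
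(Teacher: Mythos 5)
Your proposal is correct and follows essentially the same route as the paper's proof: decompose the Gelfand--Zeitlin sum part by part, then induct within each part using the operator Leibniz rule, the commutation $s\circ\varphi_{\mathbf{a}}=\varphi_{s(\mathbf{a})}\circ s$ together with $G$-invariance of the argument, and the same partial-fraction cancellation $\frac{1}{x_{\mathbf{u}}-x_{\mathbf{c}}}-\frac{1}{x_{\mathbf{u}}-x_{\mathbf{d}}}=\frac{x_{\mathbf{c}}-x_{\mathbf{d}}}{(x_{\mathbf{u}}-x_{\mathbf{c}})(x_{\mathbf{u}}-x_{\mathbf{d}})}$. The only difference is organizational: the paper inducts on $\lambda_i$ by peeling off the outermost divided difference (removing the $\prec$-maximal element of the part), while you build up $\partial(\mu,j)$ from the innermost factor with explicit intermediate coefficients, and your bookkeeping checks out.
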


\begin{proof}
Let us try to rewrite the Gelfand-Zeitlin formulae which describe the
action of $E_i$ when applied to elements in $\mathbf{R}$ (for $F_i$ one can use
similar arguments). Let $s$ be some element in $\{1,2,\dots,m\}$.
We need to check that the coefficient at $\varphi_{(i,s)}$ on the right hand side of the
formula in the formulation is the correct one. We use induction on $\lambda_i$ via the size of the part
$\mu_j$ of $\mu$ containing $s$. If $\mu_j=1$, then the claim follows directly from  
the Gelfand-Zeitlin formulae. In the inductive procedure below we will prove the result
at the same time for all $s\in \underline{\mu_j}$.

To prove the induction step, we assume that $\mu_j>1$ and set 
\begin{displaymath}
\mathbf{c}:=(i,a_{\mu_1+\mu_2+\dots+\mu_{j}-1}),\quad
\mathbf{d}:=(i,a_{\mu_1+\mu_2+\dots+\mu_{j}}).
\end{displaymath}
Note that  $a_{\mu_1+\mu_2+\dots+\mu_{j}}$ is the maximum element of $\underline{\mu_j}$ with respect to $\prec$ and
$a_{\mu_1+\mu_2+\dots+\mu_{j}-1}$ 
is the maximum element of $\underline{\mu_j}\setminus\{a_{\mu_1+\mu_2+\dots+\mu_{j}}\}$ with respect to $\prec$.
We now use the induction hypothesis and compute: 
\begin{displaymath}
\frac{\mathrm{id}-(\mathbf{c},\mathbf{d})}{x_{\mathbf{c}}-x_{\mathbf{d}}}\left(
\sum_{\mathbf{u}\in \underline{\mu_j}\setminus\{\mathbf{d}\}}^{}   
\frac{\displaystyle \prod_{\mathbf{a}\in I^{(i+1)}}(x_{\mathbf{u}}-x_{\mathbf{a}})}
{\displaystyle \prod_{\mathbf{b}\in I^{(i)}\setminus \{\mathbf{u},\mathbf{d}\}}
(x_{\mathbf{u}}-x_{\mathbf{b}})} \varphi_{\mathbf{u}}
\right).
\end{displaymath}
The coefficients at $\varphi_{\mathbf{c}}$ and $\varphi_{\mathbf{d}}$ are, clearly, correct. 
For $\mathbf{u}\in I^{(i)}\setminus \{\mathbf{c},\mathbf{d}\}$, the coefficient at $\varphi_{\mathbf{u}}$ is
\begin{displaymath}
\frac{1}{(x_{\mathbf{c}}-x_{\mathbf{d}})}\left(
\frac{\displaystyle \prod_{\mathbf{a}\in I^{(i+1)}}(x_{\mathbf{u}}-x_{\mathbf{a}})}
{\displaystyle \prod_{\mathbf{b}\in I^{(i)}\setminus \{\mathbf{u},\mathbf{d}\}}(x_{\mathbf{u}}-x_{\mathbf{b}})}- 
\frac{\displaystyle \prod_{\mathbf{a}\in I^{(i+1)}}(x_{\mathbf{u}}-x_{\mathbf{a}})}
{(x_{\mathbf{u}}-x_{\mathbf{d}})\displaystyle \prod_{\mathbf{b}\in I^{(i)}\setminus \{\mathbf{u},\mathbf{c},\mathbf{d}\}}
(x_{\mathbf{u}}-x_{\mathbf{b}})}
\right).
\end{displaymath}
Setting $f$ to be the common numerator and $g$ to be the product expression in the denominator of the last
fraction, we have
\begin{multline*}
\frac{1}{(x_{\mathbf{c}}-x_{\mathbf{d}})}
\left(\frac{f}{(x_{\mathbf{u}}-x_{\mathbf{c}})g}-\frac{f}{(x_{\mathbf{u}}-x_{\mathbf{d}})g}\right)=\\=
\frac{f}{g}
\left(\frac{1}{(x_{\mathbf{c}}-x_{\mathbf{d}})(x_{\mathbf{u}}-x_{\mathbf{c}})}-
\frac{1}{(x_{\mathbf{c}}-x_{\mathbf{d}})(x_{\mathbf{u}}-x_{\mathbf{d}})}\right)=\\=
\frac{f}{g}\cdot
\frac{1}{(x_{\mathbf{u}}-x_{\mathbf{c}})(x_{\mathbf{u}}-x_{\mathbf{d}})}=
\frac{\displaystyle \prod_{\mathbf{a}\in I^{(i+1)}}(x_{\mathbf{u}}-x_{\mathbf{a}})}
{\displaystyle \prod_{\mathbf{b}\in I^{(i)}\setminus \{\mathbf{u}\}}(x_{\mathbf{u}}-x_{\mathbf{b}})}.
\end{multline*}
This shows that the coefficient at $\varphi_{\mathbf{u}}$ is also correct.
This proves the claim.
\end{proof}

Note that, varying $\prec$, we obtain different ways to write down 
the action of $E_i$ and $F_i$ on  $\mathbf{R}$.
The assertion of Proposition~\ref{prop73} seems a priori very surprising as
the intermediate computations in the proof necessarily involve elements
which are not $G$-invariant.

\subsection{Construction of singular Gelfand-Zeitlin modules}\label{s4.4}

For $\mathtt{v}\in V^*$, we denote by $G_{\mathtt{v}}$ the stabilizer of $\mathtt{v}$ in $\underline{G}$.
For $\rho\in G_{\mathtt{v}}$ and $Q$ an element of a set on which $G_{\mathtt{v}}$ acts, we will denote
by $Q^{\rho}$ the outcome of the action of $\rho$ on $Q$.

Fix $\mathtt{v}\in V^*$ with the following properties:
\begin{itemize}
\item The group $G_{\mathtt{v}}$ is the maximum element, with respect to 
inclusions, in the set $\{G_{\mathtt{w}}:\mathtt{w}\in\gimel\cdot \mathtt{v}\}$.
\item Every orbit of $G_{\mathtt{v}}$ in each $I^{(i)}$ has the form
\begin{displaymath}
\{(i,p),(i,p+1),(i,p+2),\dots,(i,p+q)\}, 
\end{displaymath}
for some $p,q$.
\end{itemize}
The set $\gimel\cdot\mathtt{v}$
is invariant under the action of $G_{\mathtt{v}}$ and we can write it as a disjoint union of orbits:
\begin{displaymath}
\gimel\cdot\mathtt{v}=\coprod_{j\in J} \mathcal{O}_j,
\end{displaymath}
where $J$ is just some indexing set. For each element $j\in J$, we fix the unique 
representative $\mathtt{u}_j$ in the orbit 
$\mathcal{O}_j$ such that the following condition is satisfied:
If $(i,p)$ and $(i,p+1)$ belong to the same orbit of $G_{\mathtt{v}}$, for some $i$ and $p$,
then 
\begin{equation}\label{eqneq5}
\mathbb{Z}\ni(\mathtt{u}_j-\mathtt{v})(x_{(i,p)})\geq (\mathtt{u}_j-\mathtt{v})(x_{(i,p+1)})\in\mathbb{Z}. 
\end{equation}
We have $\mathtt{u}_j=\xi_j^{-1}(\mathtt{v})$, for some $\xi_j\in\gimel$.
Let $X_j$ denote the set of shortest representatives of cosets in $G_{\mathtt{v}}/G_{\mathtt{u}_j}$.

We fix the natural Coxeter presentation of $G$, i.e. the one where all simple reflections
have the form $((i,p),(i,p+1))$, for some $i$ and $p$. Consider the following two subsets in $\mathbf{R}^*$:
\begin{equation}\label{eq81}
\mathbf{B}:=\bigcup_{j\in J}\bigcup_{w\in X_j}\{\mathrm{ev}_{\mathtt{v}}\circ \partial_w\circ 
\xi_j\}\qquad\text{ and }\qquad
\underline{\mathbf{B}}:=\bigcup_{j\in J}\bigcup_{w\in 
G_{\mathtt{v}}}\{\mathrm{ev}_{\mathtt{v}}\circ \partial_w\circ 
\xi_j\}.
\end{equation}
We note the use of $\mathrm{ev}_{\mathtt{v}}$ instead of $\mathrm{\bf ev}_{\mathtt{v}}$ here due
to the presence of the shifts $\xi_j$.

\begin{lemma}\label{independencelemma}
For any $\rho\in G_{\mathtt{v}}$ and any 
$\mathrm{ev}_{\mathtt{v}}\circ \partial_w\circ \xi_j\in \underline{\mathbf{B}}$, we have
\begin{displaymath}
\mathrm{ev}_{\mathtt{v}}\circ \partial_w\circ \xi_j=
\mathrm{ev}_{\mathtt{v}}\circ \partial^{(\rho)}_{w}\circ \xi_j^{\rho},
\end{displaymath}
as elements of $\mathbf{R}^{*}$.
\end{lemma}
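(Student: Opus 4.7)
The plan is to verify the identity on an arbitrary test element $f \in \mathbf{R}$ by unfolding the two superscript conventions into conjugations and then applying two standard invariances: the fact that elements of $\mathbf{R} = R^G$ are fixed by $G$, and the fact that $\mathtt{v}$ is fixed by $\rho \in G_{\mathtt{v}}$.

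First I would rewrite the twisted operators on the right-hand side as conjugates. By the definition given just before the lemma, $\partial^{(\rho)}_s = \rho\partial_s\rho^{-1}$ for simple reflections $s$; chaining this along a reduced expression $w = s_1\cdots s_l$ gives the telescoped identity $\partial^{(\rho)}_w = \rho\partial_w\rho^{-1}$ as operators on $\Omega$. Similarly, the natural action of the subgroup $G_{\mathtt{v}} \subseteq \underline{G}$ on $\gimel$ is by conjugation, because a direct computation yields $\rho\varphi_{\mathbf{a}}\rho^{-1} = \varphi_{\rho(\mathbf{a})}$ on $\Omega$; hence $\xi_j^{\rho} = \rho\xi_j\rho^{-1}$.

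Now I would evaluate the right-hand side on an arbitrary $f \in \mathbf{R}$ and let the middle conjugation collapse:
\[
\mathrm{ev}_{\mathtt{v}}\bigl(\partial^{(\rho)}_w\xi_j^{\rho}(f)\bigr)
= \mathrm{ev}_{\mathtt{v}}\bigl(\rho\partial_w\rho^{-1}\rho\xi_j\rho^{-1}(f)\bigr)
= \mathrm{ev}_{\mathtt{v}}\bigl(\rho\partial_w\xi_j\rho^{-1}(f)\bigr).
\]
Because $f \in \mathbf{R} = R^G$, we have $\rho^{-1}(f) = f$, so the right-hand side becomes $\mathrm{ev}_{\mathtt{v}}(\rho\partial_w\xi_j(f))$. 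Finally, since $\rho$ stabilises $\mathtt{v}$ and $\mathrm{ev}_{\mathtt{v}}$ depends only on the values of its argument at the point $\mathtt{v}$, we get $\mathrm{ev}_{\mathtt{v}} \circ \rho = \mathrm{ev}_{\mathtt{v}}$ on all of $R$, and the expression collapses to $\mathrm{ev}_{\mathtt{v}}(\partial_w\xi_j(f))$, which is precisely the left-hand side evaluated at $f$.

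There is no real obstacle; the lemma is a bookkeeping identity, and the only minor point to check is well-definedness, namely that $\partial_w\xi_j$ sends $\mathbf{R}$ into $R$ (so that $\mathrm{ev}_{\mathtt{v}}$ can be applied directly, as emphasised in the remark following \eqref{eq81}). This is clear because $\xi_j$ preserves $R$ and the divided difference operators $\partial_w$ send polynomials to polynomials. The conceptual content is that the two $\rho$-twists on the right are absorbed by the double invariance of $\mathtt{v}$ under $G_{\mathtt{v}}$ and of $\mathbf{R}$ under $G$.
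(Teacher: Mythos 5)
Your proof is correct and follows essentially the same route as the paper: unfold $\partial^{(\rho)}_w=\rho\partial_w\rho^{-1}$ and $\xi_j^{\rho}=\rho\xi_j\rho^{-1}$, use $\mathrm{ev}_{\mathtt{v}}\circ\rho=\mathrm{ev}_{\mathtt{v}}$ since $\rho\in G_{\mathtt{v}}$, and absorb the leftover $\rho^{\pm1}$ by the $G$-invariance of elements of $\mathbf{R}$. The only cosmetic difference is that you derive the conjugation identity for $\partial^{(\rho)}_w$ by telescoping along a reduced expression, where the paper cites \cite[Lemma~3.3]{BGG}.
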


\begin{proof}
As $\rho\in G_{\mathtt{v}}$, we have $\mathrm{ev}_{\mathtt{v}}=\mathrm{ev}_{\mathtt{v}}\circ\rho$.
We have  $\rho\circ \partial_w\circ \rho^{-1}=\partial^{(\rho)}_{w}$ by \cite[Lemma~3.3]{BGG}
and hence 
\begin{displaymath}
\mathrm{ev}_{\mathtt{v}}\circ \partial_w\circ \xi_j=
\mathrm{ev}_{\mathtt{v}}\circ \partial^{(\rho)}_{w}\circ \xi_j^{\rho}\circ \rho^{-1}.
\end{displaymath}
As the set $\mathbf{R}$ consists of $G$-invariant polynomials, the claim follows.
\end{proof}

\begin{lemma}\label{lem82}
The element $\mathrm{ev}_{\mathtt{v}}\circ \partial_w\circ  \xi_j$ of $\underline{\mathbf{B}}$
is zero (as an element of $\mathbf{R}^{*}$) provided that $w\not\in X_j$.
\end{lemma}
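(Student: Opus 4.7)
The plan is to factor $\partial_w$ by peeling off a simple reflection lying in $G_{\mathtt{u}_j}$ and then to observe that such a reflection commutes with $\xi_j$; this will show that $\partial_s\circ\xi_j$ already annihilates all of $\mathbf{R}$, which immediately kills the whole composition $\mathrm{ev}_{\mathtt{v}}\circ\partial_w\circ\xi_j$.

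The first step is to identify $G_{\mathtt{u}_j}$ as a standard parabolic subgroup of $G$ with respect to the natural Coxeter presentation, whose simple reflections are the $((i,p),(i,p+1))$. The second defining condition on $\mathtt{v}$ forces each $G_{\mathtt{v}}$-orbit on $I^{(i)}$ to be an interval, and the maximality of $G_{\mathtt{v}}$ gives $G_{\mathtt{u}_j}\subseteq G_{\mathtt{v}}$; combined with the normalization (\ref{eqneq5}), which arranges the values of $\mathtt{u}_j-\mathtt{v}$ monotonically within each $G_{\mathtt{v}}$-orbit, this makes the $G_{\mathtt{u}_j}$-orbits into subintervals, so $G_{\mathtt{u}_j}$ is generated by a set of simple reflections of $G$. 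Since $w\in G_{\mathtt{v}}$ is not of minimal length in its coset $wG_{\mathtt{u}_j}$, standard Coxeter theory then produces a simple reflection $s\in G_{\mathtt{u}_j}$ such that $ws$ is shorter than $w$, and consequently some reduced expression of $w$ ends in $s$, yielding $\partial_w=\partial_{ws}\partial_s$.

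The second step is to show that this $s$ commutes with $\xi_j$. Writing $\xi_j=\prod_{\mathbf{a}}\varphi_{\mathbf{a}}^{c_{\mathbf{a}}}$, the identity $\mathtt{u}_j=\xi_j^{-1}(\mathtt{v})$ forces $c_{\mathbf{a}}=(\mathtt{u}_j-\mathtt{v})(x_{\mathbf{a}})$, while a short direct calculation gives $s\varphi_{(i,p)}s^{-1}=\varphi_{(i,p+1)}$ for $s=((i,p),(i,p+1))$, so conjugation by $s$ swaps the exponents $c_{(i,p)}$ and $c_{(i,p+1)}$. The inclusion $s\in G_{\mathtt{u}_j}\subseteq G_{\mathtt{v}}$ means that both $\mathtt{u}_j$ and $\mathtt{v}$ are constant across $\{(i,p),(i,p+1)\}$, so $c_{(i,p)}=c_{(i,p+1)}$ and $s\xi_j=\xi_js$. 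For any $f\in\mathbf{R}$ this yields $s(\xi_j f)=\xi_j(sf)=\xi_jf$, hence $\partial_s(\xi_j f)=0$, and the whole composition $\mathrm{ev}_{\mathtt{v}}\circ\partial_w\circ\xi_j=\mathrm{ev}_{\mathtt{v}}\circ\partial_{ws}\circ\partial_s\circ\xi_j$ vanishes on $\mathbf{R}$.

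The only non-routine ingredient is the commutation $s\xi_j=\xi_js$, and this is exactly where both hypotheses on $\mathtt{u}_j$ must be used simultaneously: the maximality of $G_{\mathtt{v}}$ is what forces $s\in G_{\mathtt{v}}$, so that $\mathtt{v}$ too is constant on $\{(i,p),(i,p+1)\}$, while the ordering normalization (\ref{eqneq5}) is what makes $G_{\mathtt{u}_j}$ a standard parabolic and so guarantees that a simple reflection $s\in G_{\mathtt{u}_j}$ with $\ell(ws)<\ell(w)$ is actually available. Once this is in hand, the rest is just the Leibniz-type definition of $\partial_s$ combined with the $G$-invariance of $\mathbf{R}$.
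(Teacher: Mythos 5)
Your proof is correct and follows essentially the same route as the paper: peel off a simple reflection $\tau\in G_{\mathtt{u}_j}$ from a reduced expression of $w$, so $\partial_w=\partial_{w\tau}\partial_{\tau}$, and observe that $\partial_{\tau}\circ\xi_j$ annihilates $\mathbf{R}$ because $\xi_j(f)$ is $\tau$-invariant for $G$-invariant $f$. Your write-up merely makes explicit two points the paper leaves implicit, namely that $G_{\mathtt{u}_j}$ is a standard parabolic (via maximality of $G_{\mathtt{v}}$ and the normalization \eqref{eqneq5}) and that $\tau$ commutes with $\xi_j$ since both $\mathtt{u}_j$ and $\mathtt{v}$ are constant on the pair swapped by $\tau$.
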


\begin{proof}
If $w\not\in X_j$, we may write $\partial_w=\partial_{w'}\partial_{\tau}$ for some simple reflection
$\tau\in G_{\mathtt{u}_j}$. For any $f\in\mathbf{R}$, we then have that $\tau(\xi_j(f))=
\xi_j(f)$. Therefore $\partial_{\tau}\xi_j(f)=0$. The claim follows.
\end{proof}

From Lemma~\ref{lem82} we get that either $\underline{\mathbf{B}}=\mathbf{B}$ or
$\underline{\mathbf{B}}=\mathbf{B}\cup\{0\}$.
Let $M_{\mathtt{v}}$ be the linear span, in $\mathbf{R}^*$, of $\underline{\mathbf{B}}$.
Then $M_{\mathtt{v}}$ coincides with the linear span, in $\mathbf{R}^*$, of ${\mathbf{B}}$.

\begin{theorem}\label{thm81}
The space $M_{\mathtt{v}}$ is closed under the action of $U_{\lambda}$, moreover, 
${\mathbf{B}}$ is a basis in $M_{\mathtt{v}}$.
\end{theorem}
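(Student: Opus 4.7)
Proof plan: The theorem has two parts, (i) that $M_{\mathtt{v}}$ is closed under the right $U_\lambda$-action and (ii) that $\mathbf{B}$ is linearly independent. My plan is to prove (i) using the divided-difference form of the Gelfand-Zeitlin formulae from Proposition~\ref{prop73}, and (ii) via a generalized-eigenspace decomposition for $\mathbf{R}$ combined with the nil-Coxeter basis of \cite[Theorem~3.4]{BGG}.

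The technical linchpin for (i) is a commutation: if $\tau=(\mathbf{a},\mathbf{b})$ is a simple reflection in $G_{\mathtt{v}}$, then $\tau$ fixes both $\mathtt{v}$ and every $\mathtt{u}_j$ (since $G_{\mathtt{u}_j}\subseteq G_{\mathtt{v}}$ by maximality), so $\xi_j$ assigns equal integer shifts to $x_{\mathbf{a}}$ and $x_{\mathbf{b}}$; this yields $\tau\xi_j=\xi_j\tau$ and hence $\partial_\tau\xi_j=\xi_j\partial_\tau$. In particular $\xi_j$ commutes with every $\partial(\mu,p)$ whose part $\mu_p$ is a $G_{\mathtt{v}}$-orbit. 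For $E_i$, I take $\mu$ to be the composition of $\lambda_i$ whose parts are exactly the $G_{\mathtt{v}}$-orbits in $I^{(i)}$ (intervals by hypothesis), apply Proposition~\ref{prop73} to write $E_i=\sum_p\partial(\mu,p)\,m_{f(\mu,p)^+}\,\varphi_{(i,\min\mu_p)}$ on $\mathbf{R}$, and compute
\[
(\mathrm{ev}_{\mathtt{v}}\circ\partial_w\circ\xi_j)\cdot E_i=\sum_p\mathrm{ev}_{\mathtt{v}}\circ\partial_w\partial(\mu,p)\circ m_{\xi_j(f(\mu,p)^+)}\circ\xi',
\]
where $\xi':=\varphi_{(i,\min\mu_p)}\xi_j\in\gimel$. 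The product $\partial_w\partial(\mu,p)$ equals either $0$ or $\partial_{w'}$ for some $w'\in G_{\mathtt{v}}$ (nil-Coxeter relations), and iterated Leibniz moves $m_{\xi_j(f(\mu,p)^+)}$ past $\partial_{w'}$ to produce a $\mathbb{C}$-linear combination of terms $\mathrm{ev}_{\mathtt{v}}\circ\partial_v\circ\xi'$ with $v\in G_{\mathtt{v}}$. Applying Lemma~\ref{independencelemma} with a $\rho\in G_{\mathtt{v}}$ chosen to bring $\xi'(\mathtt{v})$ to the chosen representative of its $G_{\mathtt{v}}$-orbit $\mathcal{O}_{j''}$ rewrites each such term inside $\underline{\mathbf{B}}$, and Lemma~\ref{lem82} eliminates any summand with $v\notin X_{j''}$. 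The arguments for $F_i$ and $\gamma_{\mathbf{a}}$ are parallel but strictly simpler.

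For (ii), the same Leibniz calculation applied to $p\in\mathbf{R}$ in place of $E_i$ shows that the right action of $\mathbf{R}$ on $M_{\mathtt{v}}$ is upper-triangular in the length of $w$, with diagonal scalar $\chi_j(p):=p(\mathtt{u}_j)$ on the subset $\mathbf{B}_j:=\{\mathrm{ev}_{\mathtt{v}}\circ\partial_w\circ\xi_j:w\in X_j\}$. Under our hypotheses on $\mathtt{v}$, the points $\mathtt{u}_j$ lie in pairwise distinct $G$-orbits on $V^*$: if $g\mathtt{u}_j=\mathtt{u}_{j'}$ for some $g\in G$, then $g\mathtt{v}-\mathtt{v}$ is an integer shift, and the maximality of $G_{\mathtt{v}}$ together with the interval-orbit condition forces $g\in G_{\mathtt{v}}$ (otherwise one could exhibit a point in $\gimel\cdot\mathtt{v}$ with strictly larger stabilizer), hence $j=j'$. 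Consequently the characters $\chi_j$ are pairwise distinct and the spans of the $\mathbf{B}_j$ form a direct sum in $\mathbf{R}^*$. The problem reduces to linear independence inside each $\mathbf{B}_j$; pulling the invertible $\xi_j$ out, this is the statement that $\{\partial_w:w\in X_j\}$ is linearly independent as operators on $\xi_j(\mathbf{R})\subseteq R^{G_{\mathtt{u}_j}}$, which follows from the parabolic analogue of \cite[Theorem~3.4]{BGG} for the pair $G_{\mathtt{u}_j}\subseteq G_{\mathtt{v}}$. The principal technical obstacle is the bookkeeping in step (i): one must verify that denominators of $\xi_j(f(\mu,p)^+)$ are non-vanishing at $\mathtt{v}$ (guaranteed because variables in different $G_{\mathtt{v}}$-orbits take non-integer-differing values, again by the maximality of $G_{\mathtt{v}}$) and that every shift arising from iterated Leibniz is absorbed into a canonical $\xi_{j''}$ via Lemma~\ref{independencelemma}.
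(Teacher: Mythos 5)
Your overall architecture matches the paper's (rewrite $E_i,F_i$ via Proposition~\ref{prop73}, push functions left by Leibniz, use Lemmas~\ref{independencelemma} and~\ref{lem82}, then separate Gelfand--Zeitlin characters and prove independence block by block), but both halves contain a genuine gap. In part (i), your linchpin commutation is false as stated: from $G_{\mathtt{u}_j}\subseteq G_{\mathtt{v}}$ you cannot conclude that a simple reflection $\tau=(\mathbf{a},\mathbf{b})\in G_{\mathtt{v}}$ fixes $\mathtt{u}_j$ -- the inclusion goes the wrong way, and typically $\xi_j$ shifts $x_{\mathbf{a}}$ and $x_{\mathbf{b}}$ by \emph{different} integers even though $\mathtt{v}(x_{\mathbf{a}})=\mathtt{v}(x_{\mathbf{b}})$ (this is exactly the situation the construction is designed to handle). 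Consequently $\xi_j$ does not commute with $\partial(\mu,p)$ when $\mu$ is taken to be the $G_{\mathtt{v}}$-orbit composition: conjugating $\partial_{\mathbf{a},\mathbf{b}}$ by such a shift produces a ``shifted'' difference operator with denominator $x_{\mathbf{a}}-x_{\mathbf{b}}+(s_{\mathbf{a}}-s_{\mathbf{b}})$, which leaves the nil-Coxeter framework and is not obviously expressible in $\underline{\mathbf{B}}$. The paper avoids this by choosing $\mu$ to be the composition given by the orbits of $G_{\mathtt{u}_j}$ (so $\mu$ depends on the basis vector being acted upon); for those parts every transposition occurring in $\partial(\mu,p)$ lies in $G_{\mathtt{u}_j}$, hence genuinely commutes with $\xi_j$, and the choice of representative in \eqref{eqneq5} guarantees that the relevant evaluated denominators (including differences inside a single $G_{\mathtt{v}}$-orbit, which your choice never has to face) are non-zero. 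Your displayed formula for $(\mathrm{ev}_{\mathtt{v}}\circ\partial_w\circ\xi_j)\cdot E_i$ is therefore incorrect in general, and the closure argument does not go through as written.

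In part (ii), separating the blocks $\mathbf{B}_j$ by their characters is fine (and your claim that distinct $j$ give distinct characters is correct, though the clean reason is that, by maximality, distinct values of $\mathtt{v}$ within a row $i<k$ differ by non-integers, so any $g\in G$ matching $\mathtt{u}_j$ to $\mathtt{u}_{j'}$ must preserve the level sets of $\mathtt{v}$ and hence lie in $G_{\mathtt{v}}$). The gap is in the reduction inside a block: what must be shown is linear independence of the \emph{functionals} $\mathrm{ev}_{\mathtt{v}}\circ\partial_w$, $w\in X_j$, restricted to the shifted invariants $\xi_j(\mathbf{R})$, i.e.\ that no non-zero combination is annihilated both by restricting to this proper subspace and by evaluating at the singular point $\mathtt{v}$. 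Linear independence of $\{\partial_w\}$ as operators on $R$ (the content of \cite[Theorem~3.4]{BGG}, parabolic or not) is strictly weaker and does not imply this: composing with a single evaluation can easily kill independence. This is precisely where the paper has to work: it passes to the coinvariant algebra of $G_{\mathtt{v}}$, invokes Soergel's Endomorphismensatz \cite{So} to identify the image of the shifted symmetric polynomials with the subalgebra of $G_{\xi_j}$-invariants in the coinvariants, and then applies \cite[Theorem~5.5]{BGG} to a socle element of that subalgebra to get independence after evaluation. Your proposal cites no substitute for this step, so the ``basis'' half of the theorem is not established.
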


\begin{proof}
To prove that $M_{\mathtt{v}}$ is  closed under the action of $U_{\lambda}$, let us check that it is closed
under the right multiplication with the generators. Multiplying on the right with an element 
$h\in \mathbf{R}$, we can move this element past  $\xi_j$ by acting on it, that is 
\begin{displaymath}
\xi_j\circ h= h^{\xi_j} \circ \xi_j,
\end{displaymath}
and then we can move the new
element $h^{\xi_j} $ past $\partial_w$ using the Leibniz rule. 
When we reach $\mathrm{ev}_{\mathtt{v}}$, we evaluate
at the point $\mathtt{v}$. 
This means that $M_{\mathtt{v}}$ is closed under the action of
all $\gamma_{\mathbf{a}}$, where $\mathbf{a}\in I$.

Let us prove that $M_{\mathtt{v}}$ is closed under the action of $E_i$ and $F_i$, where $i<k$.
We prove this for $E_i$ and in case of $F_i$ the proof is similar.
Consider the element
\begin{equation}\label{eq3}
\mathrm{ev}_{\mathtt{v}}\circ \partial_w\circ  \xi_j\circ E_i, 
\end{equation}
for some $\mathrm{ev}_{\mathtt{v}}\circ \partial_w\circ  \xi_j\in\mathbf{B}$. We want to prove
that \eqref{eq3} can be written as a linear combination of elements in $\underline{\mathbf{B}}$.

Let $\mu$ be the composition of $\{1,2,\dots,\lambda_i\}$ corresponding to the orbits of 
$G_{\mathtt{u}_j}$ on $\{1,2,\dots,\lambda_i\}$. We write $E_i$ in the form given by 
Proposition~\ref{prop73}, for the composition $\mu$. 
Then every divided difference operator $\partial_{\mathbf{a},\mathbf{b}}$
appearing in this expression has the property that $\xi_j$ is invariant under 
$(\mathbf{a},\mathbf{b})$. In this case the application of
$\xi_j$ to $x_{\mathbf{a}}-x_{\mathbf{b}}$ gives $x_{\mathbf{a}}-x_{\mathbf{b}}$.
This implies the relation
\begin{displaymath}
\xi_j\circ \partial_{\mathbf{a},\mathbf{b}}=\partial_{\mathbf{a},\mathbf{b}}\circ \xi_j
\end{displaymath}
which allows us to move all operators $\partial_{\mathbf{a},\mathbf{b}}$ to the left of $\xi_j$.

For each part $\mu_s$ of $\mu$, the corresponding rational function $f(\mu,s)^+$ has, by construction, the 
property that, for any $w\in G_{\mathtt{u}_j}$, the evaluation of $(\xi_j(f(\mu,s)^+))^w$ at
$\mathtt{v}$ is well-defined as none of the denominators of $(\xi_j(f(\mu,s)^+))^w$ evaluates to zero.
Therefore we may use the Leibniz rule to move $(\xi_j(f(\mu,s)^+))^w$ to the left past all
the divided difference operators which appear in our expression and then evaluate the
resulting function at $\mathtt{v}$. Note that the composition $\xi_j\varphi_{(i,s)}$  may be
an  element in $\gimel$ which does not coincide with any $\xi_{j'}$.
In the latter case we may apply  Lemma~\ref{independencelemma} and, finally, get a linear combination
of elements in $\underline{\mathbf{B}}$. This completes the proof
of the first statement.

It remains to prove that $\mathbf{B}$ is a basis. 
Recall that the divided difference operators 
annihilate all symmetric polynomials (note that the latter are identified with
$\Gamma_{\lambda}$). Applying the shift of
variables corresponding to going from $\mathtt{v}$ to $\mathtt{u}_j$
to all symmetric polynomials, we obtain a vector space.
Consider the image of this vector space in the
coinvariant algebra for $G_{\mathtt{v}}$.
By \cite[Endomorphismensatz~3(i)]{So}, this image 
coincides with the algebra of $G_{\xi_j}$-invariants in the coinvariant algebra 
for $G_{\mathtt{v}}$. 

Let $v$ be a non-zero element in the socle of
the algebra of $G_{\xi_j}$-invariants in the coinvariant algebra 
for $G_{\mathtt{v}}$.
By \cite[Theorems~5.5]{BGG}, the images of $v$ under the action of 
the divided difference  operators from $X_j$ form 
a linearly independent system of elements of the coinvariant algebra. 
This implies that the part of our set $\mathbf{B}$ which corresponds 
to a given  Gelfand-Zeitlin character is linearly independent. As  
elements corresponding to different Gelfand-Zeitlin characters are
obviously linearly independent, we obtain that all elements in 
$\mathbf{B}$ are linearly independent. The claim follows.
\end{proof}

The map $w\mapsto w(\xi_j)$ is a bijection from $X_j$ to $\mathcal{O}_j$.
Therefore the elements of the basis $\mathbf{B}$ of  $M_{\mathtt{v}}$ are in a natural 
bijection with the  elements  in $\gimel\cdot \mathtt{v}$.

From \cite[Endomorphismensatz~3(i)]{So} it even follows that the space 
$M_{\mathtt{v}}(\chi_{\mathtt{w}})$, considered as a module over 
$\Gamma_{\lambda}/\mathrm{Ann}_{\Gamma_{\lambda}}
(M_{\mathtt{v}}(\chi_{\mathtt{w}}))$, 
is isomorphic to the regular representation of the algebra of $G_{\mathtt{w}}$-invariants in the 
coinvariant algebra of $G_{\mathtt{v}}$.

We expect that each simple Gelfand-Zeitlin $U_{\lambda}$-module 
is a subquotient of $M_{\mathtt{v}}$, for some ${\mathtt{v}}$.

\subsection{Sufficient conditions for simplicity}\label{s4.5}

\begin{theorem}\label{thm77}
Let $\mathtt{v}\in V^*$ be as in  Subsection~\ref{s4.4}. Further, assume that, for any $i=1,2,\dots,k-1$ 
and for any $\mathbf{a}\in I^{(i)}$ and $\mathbf{b}\in I^{(i+1)}$, we have
$\mathtt{v}(x_{\mathbf{a}})-\mathtt{v}(x_{\mathbf{b}})\not\in \mathbb{Z}$.
Then the module $M_{\mathtt{v}}$ is simple. Consequently, $M_{\mathtt{v}}\cong L_{\mathtt{v}}$ in this case.
\end{theorem}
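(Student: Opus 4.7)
The plan is to show that every non-zero $U_{\lambda}$-submodule $N\subseteq M_{\mathtt{v}}$ equals $M_{\mathtt{v}}$. Under the hypotheses, the index $j_{0}\in J$ with $\mathtt{u}_{j_{0}}=\mathtt{v}$ has $G_{\mathtt{u}_{j_{0}}}=G_{\mathtt{v}}$ and hence $X_{j_{0}}=\{e\}$; moreover, the maximality of $G_{\mathtt{v}}$ forbids two distinct $G_{\mathtt{v}}$-orbit values inside a single $I^{(i)}$ from differing by an integer, which in turn prevents any $\mathtt{u}_{j}\neq\mathtt{v}$ in $\gimel\cdot\mathtt{v}$ from being $G$-equivalent to $\mathtt{v}$. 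Combining these observations with the remark after Theorem~\ref{thm81}, the weight space $M_{\mathtt{v}}(\chi_{\mathtt{v}})$ is one-dimensional, spanned by $\mathrm{ev}_{\mathtt{v}}$. Thus it suffices to check that (i)~$N$ meets $M_{\mathtt{v}}(\chi_{\mathtt{v}})$ non-trivially, so that $\mathrm{ev}_{\mathtt{v}}\in N$, and (ii)~$\mathrm{ev}_{\mathtt{v}}\cdot U_{\lambda}=M_{\mathtt{v}}$; both will be established by the same inductive propagation argument.

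Using the $\Gamma_{\lambda}$-weight decomposition $N=\bigoplus_{j}N\cap M_{\mathtt{v}}(\chi_{\mathtt{u}_{j}})$, I would pick some $j$ with $N\cap M_{\mathtt{v}}(\chi_{\mathtt{u}_{j}})\neq 0$. By the remark after Theorem~\ref{thm81}, this weight space is the regular representation of the local Frobenius algebra of $G_{\mathtt{u}_{j}}$-invariants in the coinvariant algebra of $G_{\mathtt{v}}$; every non-zero submodule contains the one-dimensional socle, so the distinguished socle element $\omega_{j}$ lies in $N$. Choosing a shortest $\gimel$-path $\mathtt{u}_{j}=\mathtt{w}_{0},\mathtt{w}_{1},\dots,\mathtt{w}_{r}=\mathtt{v}$ in $V^{*}$ (each step a single $\varphi_{(i,p)}^{\pm 1}$), I would inductively apply $E_{i}$ or $F_{i}$ to $\omega_{\mathtt{w}_{t-1}}$, project via $\Gamma_{\lambda}$ onto $M_{\mathtt{v}}(\chi_{\mathtt{w}_{t}})$, and then descend via $\mathbf{m}_{\mathtt{w}_{t}}$ to the socle of that weight space. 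After $r$ such steps we obtain $\mathrm{ev}_{\mathtt{v}}\in N$; part (ii) is a symmetric propagation starting at $\mathrm{ev}_{\mathtt{v}}$ and exhausting the basis $\mathbf{B}$.

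The crux is the non-vanishing of each propagation step, and this is where the hypothesis of the theorem enters decisively. I would rewrite $E_{i}$ and $F_{i}$ by Proposition~\ref{prop73} with respect to the composition $\mu$ of $\{1,\dots,\lambda_{i}\}$ given by the $G_{\mathtt{u}_{j}}$-orbits, so that every divided difference $\partial(\mu,s)$ commutes with $\xi_{j}$. The transport coefficient from $\omega_{j}$ to the projection onto the next weight space then becomes, via the Leibniz rule, a Demazure operator applied to $\xi_{j}\bigl(f(\mu,s)^{\pm}\bigr)$ and evaluated against the fixed socle class. The assumption $\mathtt{v}(x_{\mathbf{a}})-\mathtt{v}(x_{\mathbf{b}})\notin\mathbb{Z}$ for $\mathbf{a}\in I^{(i)}$ and $\mathbf{b}\in I^{(i\pm 1)}$ ensures that $f(\mu,s)^{\pm}$ is regular and non-vanishing on $\gimel\cdot\mathtt{v}$; the linear independence of divided difference operators \cite[Theorem~5.5]{BGG} (as already invoked in the proof of Theorem~\ref{thm81}) then ensures that the Demazure evaluation is non-zero. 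Combining these two non-vanishings yields a non-zero socle element at each step, and completes the propagation; the identification $M_{\mathtt{v}}\cong L_{\mathtt{v}}$ then follows from the simplicity of $M_{\mathtt{v}}$ and the presence of $\mathrm{ev}_{\mathtt{v}}$.
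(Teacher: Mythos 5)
Your first half is essentially sound and matches the paper's Step~1 once one notes what the socle actually is: by Proposition~\ref{prop38}, the socle of the $\Gamma_{\lambda}$-module $M_{\mathtt{v}}(\chi_{\mathtt{u}_j})$ is spanned by $\mathrm{\bf ev}_{\mathtt{u}_j}$ itself (the $w=e$ basis vector), so your ``distinguished socle element'' $\omega_j$ is just $\mathrm{\bf ev}_{\mathtt{u}_j}$, and your propagation along a $\gimel$-path is exactly the paper's argument that every nonzero submodule contains all $\mathrm{\bf ev}_{\mathtt{w}}$, $\mathtt{w}\in\gimel\cdot\mathtt{v}$, hence $\mathrm{\bf ev}_{\mathtt{v}}$. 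One correction there: the single-step non-vanishing does not come from the linear independence of divided difference operators (\cite[Theorem~5.5]{BGG}); it comes from the Leibniz expansion of $\partial(\mu,s)\circ f(\mu,s)^{\pm}$, whose top coefficient $w_{\mu_s-1}(f(\mu,s)^{\pm})$ evaluates to a nonzero number at $\mathtt{w}$ precisely because of the non-integrality hypothesis, while all other terms involve divided differences of strictly smaller degree, so no cancellation can occur in the basis $\mathbf{B}$.

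The genuine gap is your part~(ii), which you dismiss as ``a symmetric propagation starting at $\mathrm{\bf ev}_{\mathtt{v}}$ and exhausting the basis $\mathbf{B}$''. The mechanism you propose (apply $E_i$ or $F_i$, project, then descend via $\mathbf{m}_{\mathtt{w}}$ to the socle) can only ever produce the socle vectors $\mathrm{\bf ev}_{\mathtt{w}}$, and their span is a \emph{proper} subspace of $M_{\mathtt{v}}$ as soon as some $G_{\mathtt{u}_j}\subsetneq G_{\mathtt{v}}$: containing the socle of a weight space does not give the weight space, since the socle generates only a one-dimensional $\Gamma_{\lambda}$-submodule. What must be shown is that $N=\mathrm{\bf ev}_{\mathtt{v}}\cdot U_{\lambda}$ contains, for every $j$, an element with nonzero coefficient at the \emph{top} basis vector $\mathrm{ev}_{\mathtt{v}}\circ\partial_{w}\circ\xi_j$ with $w$ longest in $X_j$, because that vector (not the socle) is the cyclic generator of $M_{\mathtt{v}}(\chi_{\mathtt{u}_j})$ over $\Gamma_{\lambda}$; this is the content of the paper's Steps~2 and~3. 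That step is exactly where the difficulty sits: one must \emph{not} descend to the socle but track the leading term of the Leibniz expansion when passing from $\mathtt{w}$ to $\mathtt{w}'$, and the analysis differs according to whether $G_{\mathtt{w}'}\subseteq G_{\mathtt{w}}$ or $G_{\mathtt{w}}\subseteq G_{\mathtt{w}'}$, forcing the two separate reduction procedures and the nontrivial combinatorial routing of the path through points with small stabilizer (the $(0,0,0,0)\to(2,2,1,1)$ example in the paper). None of this is supplied, or even replaceable, by your socle-to-socle propagation, so the simplicity claim is not established by the proposal as written.
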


\begin{proof}
{\bf Step~1.} We start by proving that any submodule of $M_{\mathtt{v}}$
contains all $\mathrm{\bf ev}_{\mathtt{w}}$, where $\mathtt{w}\in\gimel\cdot\mathtt{v}$

For this, we first prove the following two statements:
\begin{itemize}
\item For any $\mathtt{w}\in\gimel\cdot \mathtt{v}$, any generator $E_i$ and any 
$j\in\{1,2,\dots,\lambda_i\}$, the vector $\mathrm{pr}_{\chi_{\varphi_{(i,j)}(\mathtt{w})}}
(\mathrm{\bf ev}_{\mathtt{w}}\circ E_i)$ is non-zero.
\item For any $\mathtt{w}\in\gimel\cdot\mathtt{v}$, any generator $F_i$ and any 
$j\in\{1,2,\dots,\lambda_i\}$, the vector $\mathrm{pr}_{\chi_{\varphi_{(i,j)}^{-1}(\mathtt{w})}}
(\mathrm{\bf ev}_{\mathtt{w}}\circ F_i)$ is non-zero.
\end{itemize}
We will prove the first statement and the second one is proved similarly.
Without loss of generality we may assume that $\mathtt{w}=\mathtt{u}_s$, for some $s\in J$ 
(cf. Subsection~\ref{s4.4}).

Let $\mu$ be the composition of $\lambda_i$ corresponding to the orbits of $G_{\mathtt{w}}$ on $I^{(i)}$.
Write $E_i$ in the form given by Proposition~\ref{prop73} 
with respect to the composition $\mu$ and consider the element
$\mathrm{\bf ev}_{\mathtt{w}}\circ E_i$. Note that, if we vary $j$ along some orbit of 
$G_{\mathtt{w}}$ on $I^{(i)}$, this does not affect the Gelfand-Zeitlin character 
$\chi_{\varphi_{(i,j)}(\mathtt{w})}$. We have the obvious bijection between 
orbits of $G_{\mathtt{w}}$ on $I^{(i)}$ and different Gelfand-Zeitlin characters 
of the form $\chi_{\varphi_{(i,j)}(\mathtt{w})}$. Therefore in what follows we may assume that
$j=\min(\mu_s)$, for some part $\mu_s$ of $\mu$.

The summands of $E_i$ which correspond to the parts different from $\mu_s$ do not effect 
$\mathrm{pr}_{\chi_{\varphi_{(i,j)}(\mathtt{w})}}(\mathrm{\bf ev}_{\mathtt{w}}\circ E_i)$
and thus we only need to consider the summand corresponding to $\mu_s$. 
Let $a_1<a_2<\dots<a_{\mu_s}$ be elements of $\underline{\mu_s}$. For
$t=1,2,\dots,\mu_s-1$, set
\begin{displaymath}
w_t=(a_t,a_{t+1})\dots(a_2,a_3)(a_1,a_2). 
\end{displaymath}
Using the Leibniz rule, we can write 
\begin{multline*}
\partial(\mu,s)\circ f(\mu,s)^+=
w_{\mu_s-1}(f(\mu,s)^+)\circ\partial_{a_{\mu_s-1},a_{\mu_s}}\circ\cdots\circ\partial_{a_2,a_3}\circ\partial_{a_1,a_2}+\\+
\partial_{a_{\mu_s-1},a_{\mu_s}}(w_{\mu_s-2}(f(\mu,s)^+))\circ
\partial_{a_{\mu_s-2},a_{\mu_s-1}}\circ\cdots\circ\partial_{a_1,a_2}+\dots\\\dots+
\partial_{a_{\mu_s-1},a_{\mu_s}}\circ\cdots\circ\partial_{a_2,a_3}\circ\partial_{a_1,a_2}(f(\mu,s)^+).
\end{multline*}
As $\mathrm{\bf ev}_{\mathtt{w}}(w_{\mu_s-1}(f(\mu,s)^+))\neq 0$, due to our
assumptions on $\mathtt{v}$, we see that the element $\mathrm{\bf ev}_{\mathtt{w}}\circ E_i$ 
has a non-zero coefficient at the  element
$\mathrm{\bf ev}_{\mathtt{w}}\circ\partial(\mu,s)\circ\varphi_{(i,\min(\mu_s))}$
and the rest is a linear combination of divided difference operators of strictly smaller degree.
This implies that $\mathrm{pr}_{\chi_{\varphi_{(i,j)}(\mathtt{w})}}
(\mathrm{\bf ev}_{\mathtt{w}}\circ E_i)$ is non-zero.

The above implies that every submodule of $M_{\mathtt{v}}$ contains 
$\mathrm{\bf ev}_{\mathtt{v}}$. Set $N:=\mathrm{\bf ev}_{\mathtt{v}}\circ U_{\lambda}$.
In combination with Proposition~\ref{prop38}, the above implies that $N$ 
contains all $\mathrm{\bf ev}_{\mathtt{w}}$, where $\mathtt{w}\in\gimel\cdot\mathtt{v}$.
Also, note that, taking Step~1 into account, 
the statement of the theorem is equivalent to showing that 
$N=M_{\mathtt{v}}$. 

{\bf Step~2.} Next we claim the following: if $N$ contains some element that, written in the basis 
$\mathbf{B}$, has a non-zero coefficient at $\mathbf{ev}_{\mathtt{v}}\circ \partial_w\circ  \xi$
where $\partial_w$ has maximal possible degree (i.e. $w$ has maximal possible length) among
all elements of the form $\mathbf{ev}_{\mathtt{v}}\circ \partial_x\circ  \xi$ in $\mathbf{B}$,
then $N$ contains the whole subspace $M_{\mathtt{v}}(\chi_{\mathtt{u}})$.

Consider the Gelfand-Zeitlin subspace $M_{\mathtt{v}}(\chi_{\mathtt{u}})$ 
of $M_{\mathtt{v}}$
containing such $\mathbf{ev}_{\mathtt{v}}\circ \partial_w\circ  \xi$.
As mentioned in the previous subsection, 
as a module over $\Gamma_{\lambda}$, this subspace is isomorphic
to the regular module over the (local) algebra of 
$G_{\mathtt{u}}$-invariants in the coinvariant algebra of 
$G_{\mathtt{v}}$. In particular, the $\Gamma_{\lambda}$ 
module $M_{\mathtt{v}}(\chi_{\mathtt{u}})$ has a unique maximal submodule.
Because of the maximality of the degree for
$\mathbf{ev}_{\mathtt{v}}\circ \partial_w\circ  \xi$, this element
cannot be written as a linear combination of elements of the form 
$\mathbf{ev}_{\mathtt{v}}\circ \partial_{w'}\circ  \xi_j\circ u$,
where $u\in\Gamma_{\lambda}$ 
and the length of $w'$ is strictly smaller than the length of $w$. Consequently,
the element $\mathbf{ev}_{\mathtt{v}}\circ \partial_w\circ  \xi$ generates
$M_{\mathtt{v}}(\chi_{\mathtt{u}})$ as a $\Gamma_{\lambda}$-module.
This implies the claim and, in particular, shows that, 
if $N$ contains some element that, written in the basis $\mathbf{B}$, 
has a non-zero coefficient at $\mathbf{ev}_{\mathtt{v}}\circ \partial_w\circ  \xi$
where $\partial_w$ has maximal possible degree, then 
$N$ contains $\mathbf{ev}_{\mathtt{v}}\circ \partial_w\circ  \xi$.

{\bf Step~3.}
It remains to show that, for each $\mathbf{ev}_{\mathtt{v}}\circ \partial_w\circ  \xi$
where $\partial_w$ has maximal possible degree, the module $N$ contains some element 
which, when written in the basis $\mathbf{B}$, 
has a non-zero coefficient at $\mathbf{ev}_{\mathtt{v}}\circ \partial_w\circ  \xi$.
Note that the element $\mathbf{ev}_{\mathtt{v}}\in N$ is itself of such form.
Note also that, if $\mathbf{ev}_{\mathtt{v}}\circ \partial_w\circ  \xi$ is such an element,
then $w$ is the longest element among all shortest coset representatives in 
$G_{\mathtt{v}}/G_{\mathtt{w}}$,  where  $\mathtt{w}$ is the shift of $\mathtt{v}$ by $\xi$. 
If $G_{\mathtt{w}}=G_{\mathtt{v}}$, the claim of Step~3 follows directly from Step~1. 

Now we would like to establish two reduction procedures to prove the claim of Step~3.
Let $\mathtt{w}$ be the shift of $\mathtt{v}$ by $\xi$. We will use 
the first procedure to change $\mathtt{w}$
to some $\mathtt{w}'$ such that $G_{\mathtt{w}'}\subset G_{\mathtt{w}}$ 
and the second one to change $\mathtt{w}$
to some $\mathtt{w}'$ such that
$G_{\mathtt{w}}\subset G_{\mathtt{w}'}$.
As we will see, in the case $G_{\mathtt{w}}=G_{\mathtt{w}'}$, both procedures 
lead to the same outcome.

Assume first that $N$ contains 
some $\mathbf{ev}_{\mathtt{v}}\circ \partial_w\circ  \xi$ with 
$\partial_w$ of maximal possible degree.
Let $\mathtt{w}$ be the shift of $\mathtt{v}$ by $\xi$. Consider
the element $\mathbf{ev}_{\mathtt{v}}\circ \partial_w\circ  \xi\circ E_i$
and assume that there is some $j$ such that, for $\xi':=\varphi_{(i,j)}\xi$
and $\mathtt{w}'$ being the shift of $\mathtt{v}$ by $\xi'$,
we have $G_{\mathtt{w}'}\subset G_{\mathtt{w}}$. We claim that 
$N$ contains  $\mathbf{ev}_{\mathtt{v}}\circ \partial_{w'}\circ  \xi'$ where
$w'$ is the longest elements among the shortest coset representatives
in $G_{\mathtt{v}}/G_{\mathtt{w}'}$.
Ignoring the contributions of those components in $\mathtt{w}$ and $\mathtt{w}'$ that are identical,
we may assume that $G_{\mathtt{w}}=S_a$, where $a\geq 1$, and $G_{\mathtt{w}'}=S_1\times S_{a-1}$.
If $a=1$, then $G_{\mathtt{w}}=G_{\mathtt{w}'}$ and the claim follows
from the computation similar to the one used in Step~1, so we assume $a>1$.
We use Proposition~\ref{prop73} to write $E_i$ in the form
$\partial(\mu,1)f(\mu,1)^+\varphi_{(i,1)}$ where $\mu$ is just one block (which is
the orbit of our $S_a$). Then 
$\xi$ commutes with $\partial(\mu,1)$ and we can use, as above, the Leibniz rule to 
move the polynomial $f(\mu,1)^+$
all the way to the left and evaluate it to a non-zero number using our assumptions.
Taking the last sentence in Step~2 into account, 
this gives us the element $\partial_w\partial(\mu,1)$ which 
is exactly the element of maximal possible degree
that we needed. Similarly one considers the case of $F_i$
and $\xi':=\varphi_{(i,j)}^{-1}\xi$.

For the second procedure, we assume that $N$ contains 
some $\mathbf{ev}_{\mathtt{v}}\circ \partial_w\circ  \xi$ with 
$\partial_w$ of maximal possible degree.
Let $\mathtt{w}$ be the shift of $\mathtt{v}$ by $\xi$. Consider
the element $\mathbf{ev}_{\mathtt{v}}\circ \partial_w\circ  \xi\circ E_i$
and assume that there is some index $j$ in row $i$ that is fixed by $G_{\mathtt{w}}$.
Let $\xi':=\varphi_{(i,j)}\xi$ and $\mathtt{w}'$
be the shift of $\mathtt{v}$ by $\xi'$. We claim that 
$N$ contains  $\mathbf{ev}_{\mathtt{v}}\circ \partial_{w'}\circ  \xi'$ where
$w'$ is the longest elements among the shortest coset representatives
in $G_{\mathtt{v}}/G_{\mathtt{w}'}$. Note that in this situation
$G_{\mathtt{w}'}$ contains $G_{\mathtt{w}}$, in particular,
$w'$ is a shortest coset representative in $G_{\mathtt{v}}/G_{\mathtt{w}}$.
Since $N$ contains $\mathbf{ev}_{\mathtt{v}}\circ \partial_w\circ  \xi$,
from Step~2 it follows that
$N$ also contains $\mathbf{ev}_{\mathtt{v}}\circ \partial_{w'}\circ  \xi$.
Using the computation similar to the one used in Step~1 and the last sentence
in Step~2,
one shows that $N$ contains $\mathbf{ev}_{\mathtt{v}}\circ \partial_{w'}\circ  \xi'$.
Similarly one considers the case of $F_i$
and $\xi':=\varphi_{(i,j)}^{-1}\xi$.

Finally, we note that any element in $\gimel\cdot\mathtt{v}$ can be obtained from $\mathtt{v}$ by an 
appropriate sequence of applications of these two reduction procedures.
Indeed, given some vector $\mathbf{x}$ in some $\mathbb{Z}^r$, we can 
start from the zero vector and use the first reduction to
increase the first coefficient from $0$ to $r$, then the second coefficient
from $0$ to $r-1$ and so on. Thus, using the first reduction, we will
get, from the zero vector, a vector with all different coefficients.
Now we can use the first reduction to increase these different coefficients 
such that all coefficients in the outcome are different and bigger than all
coefficients of $\mathbf{x}$. Next we can use the first reduction decreasing
the smallest coefficients until it 
equalizes with the smallest coefficient of $\mathbf{x}$. 
Now we can use the first reduction (and, if necessary, at the very last step the
second reduction) to  equalize the next smallest coefficient with the 
next smallest coefficient of $\mathbf{x}$. Proceeding in the similar manner
will give us $\mathbf{x}$. Here is an example:
to go from $(0,0,0,0)$ to $(2,2,1,1)$, we make the following moves where
$\overset{1}{\to}$ means application of the first reduction and
$\overset{2}{\to}$ means application of the second reduction:
\begin{multline*}
(0,0,0,0)\overset{1}{\to}(1,0,0,0)
\overset{1}{\to} (2,0,0,0)
\overset{1}{\to} (3,0,0,0)
\overset{1}{\to} (4,0,0,0)
\overset{1}{\to} (4,0,0,0)
\overset{1}{\to} \\
(5,0,0,0)
\overset{1}{\to} (6,0,0,0)
\overset{1}{\to} (6,1,0,0)
\overset{1}{\to} (6,2,0,0)
\overset{1}{\to} (6,3,0,0)
\overset{1}{\to} (6,4,0,0)
\overset{1}{\to} \\(6,5,0,0)
\overset{1}{\to} (6,5,1,0)
\overset{1}{\to} (6,5,2,0)
\overset{1}{\to} (6,5,3,0)
\overset{1}{\to} (6,5,4,0)
\overset{1}{\to} (6,5,4,1)
\overset{1}{\to} \\(6,5,4,2)
\overset{1}{\to} (6,5,4,3)
\overset{1}{\to} (6,5,4,2)
\overset{1}{\to} (6,5,4,1)
\overset{1}{\to} (6,5,3,1)
\overset{1}{\to} (6,5,2,1)
\overset{2}{\to} \\(6,5,1,1)
\overset{1}{\to} (6,4,1,1)
\overset{1}{\to} (6,3,1,1)
\overset{1}{\to} (6,2,1,1)
\overset{1}{\to} (5,2,1,1)
\overset{1}{\to} (4,2,1,1)
\overset{1}{\to} \\(3,2,1,1)
\overset{2}{\to} (2,2,1,1).
\end{multline*}
Translated to the language of $\gimel\cdot\mathtt{v}$, we see 
that we can obtain any element  in $\gimel\cdot\mathtt{v}$ from $\mathtt{v}$
using our two reductions. This implies Step~3 and hence completes the proof of the theorem. 
\end{proof}

If $\lambda=(1,2,\dots,n)$, then, under the assumptions of Theorem~\ref{thm77} and for any
$\mathtt{w}\in\gimel\cdot\mathtt{v}$, the module $M_{\mathtt{v}}$  is
the unique simple Gelfand-Zeitlin module extending $\mathbb{C}_{\mathtt{w}}$ as it
has the same Gelfand-Zeitlin character as the module $U_{\lambda}/U_{\lambda}(\mathbf{m}_{{\mathtt{w}}})$.

\vspace{5mm}

\noindent
N.~E.: Department of Mathematics, University of Minnesota, Minneapolis, MN 55455, 
USA, email: {\tt earlnick\symbol{64}gmail.com}

\noindent
V.~M.: Department of Mathematics, Uppsala University, Box. 480,
SE-75106, Uppsala, SWEDEN, email: {\tt mazor\symbol{64}math.uu.se}

\noindent
E.~V.: Departamento de Matem{\'a}tica, Instituto de Ci{\^e}ncias Exatas,
Universidade Federal de Minas Gerais,
Av. Ant{\^o}nio Carlos, 6627, Caixa Postal: 702, CEP: 31270-901, Belo Horizonte, 
Minas Gerais, BRAZIL,
and Laboratory of Theoretical and
Mathematical Physics, Tomsk State University, Tomsk 634050, RUSSIA,
email: {\tt VishnyakovaE\symbol{64}googlemail.com}

\end{document}